\newtheorem{theorem}{Theorem}[section]
\newtheorem{lemma}[theorem]{Lemma}
\newtheorem{corollary}[theorem]{Corollary}
\theoremstyle{definition}
\newtheorem{definition}[theorem]{Definition}
\theoremstyle{remark}
\numberwithin{equation}{section}
\begin{document}

\setcounter{page}{1}

\title[Characterization of numerical radius parallelism]
{Characterization of numerical radius parallelism in $C^*$-algebras}

\author[A. Zamani]{Ali Zamani}

\address{Department of Mathematics, Farhangian University, Iran}
\email{zamani.ali85@yahoo.com}

\subjclass[2010]{Primary 46L05; Secondary 47A12, 47A30, 46B20.}

\keywords{$C^*$-algebra, numerical radius, inequality, parallelism.}
\begin{abstract}
Let $v(x)$ be the numerical radius of an element $x$ in a
$C^*$-algebra $\mathfrak{A}$. First, we prove several numerical
radius inequalities in $\mathfrak{A}$. Particularly,
we present a refinement of the triangle inequality for the numerical radius in $C^*$-algebras.
In addition, we show that if $x\in\mathfrak{A}$, then $v(x) = \frac{1}{2}\|x\|$
if and only if $\|x\| = \|\mbox{Re}(e^{i\theta}x)\| + \|\mbox{Im}(e^{i\theta}x)\|$
for all $\theta \in \mathbb{R}$.
Among other things, we introduce a new type of parallelism in
$C^*$-algebras based on numerical radius.
More precisely, we consider elements $x$
and $y$ of $\mathfrak{A}$ which satisfy
$v(x + \lambda x) = v(x) + v(y)$
for some complex unit $\lambda$.
We show that this relation can be characterized
in terms of pure states acting on $\mathfrak{A}$.
\end{abstract} \maketitle
\section{Introduction and preliminaries}
Let $\mathfrak{A}$ be a unital $C^*$-algebra with unit denoted by $e$.
We denote by $\mathcal{U}(\mathfrak{A})$ and $\mathcal{Z}(\mathfrak{A})$
the group of all unitary elements in $\mathfrak{A}$ and the centre of $\mathfrak{A}$, respectively.
For an element $x$ of $\mathfrak{A}$, we denote by
$\mbox{Re}(x) = \frac{1}{2}(x + x^*)$ and $\mbox{Im}(x) = \frac{1}{2i}(x - x^*)$
the real and the imaginary part of $x$.
Let $\mathfrak{A}'$ denote the dual space of $\mathfrak{A}$, and define the set of normalized states of
$\mathfrak{A}$ by
\begin{align*}
\mathcal{S}(\mathfrak{A}) = \{\varphi \in \mathfrak{A}':\, \varphi (e) = \|\varphi\| = 1\}.
\end{align*}
A linear functional $\varphi \in \mathfrak{A}'$ is said to be positive, and write $\varphi \geq 0$,
if $\varphi(x^*x) \geq 0$ for all $x\in \mathfrak{A}$ . Note that the set of
normalized states $\mathcal{S}(\mathfrak{A})$ is nothing but
\begin{align*}
\mathcal{S}(\mathfrak{A}) = \{\varphi \in \mathfrak{A}':\, \varphi \geq 0 \quad \mbox{and} \quad \varphi (e) = 1\}.
\end{align*}
Recall that a positive linear functional $\varphi$ on $\mathfrak{A}$ is said to be pure if for every positive
functional $\psi$ on $\mathfrak{A}$ satisfying $\psi(x^*x) \leq \varphi(x^*x)$
for all $x\in \mathfrak{A}$, there is a scalar $0 \leq \mu \leq 1$
such that $\psi = \mu \varphi$. The set of pure states on $\mathfrak{A}$
is denoted by $\mathcal{P}(\mathfrak{A})$.
The numerical range of an element $x\in \mathfrak{A}$ is
$V(x) = \{\varphi(x):\, \varphi \in \mathcal{S}(\mathfrak{A})\}.$
It is a nonempty compact and convex set of the complex plane $\mathbb{C}$,
and its maximum modulus is the numerical radius $v(x)$ of $x$; i.e.
$v(x) = \sup\{|z|: \, z\in V(x)\}$.
It is well-known that $v(\cdot)$ define a norm on $\mathfrak{A}$, which is equivalent
to the $C^*$-norm $\|\cdot\|$. In fact, the following inequalities are well-known:
\begin{align}\label{I.1.1}
\frac{1}{2}\|x\| \leq v(x)\leq \|x\| \qquad (x\in \mathfrak{A}).
\end{align}
It is a basic fact that the norm $v(\cdot)$ is self-adjoint (i.e., $v(x^*) = v(x)$ for
every $x\in \mathfrak{A}$) and also, if $x$ is normal, then $v(x) = \|x\|$.
Also, since $\mathcal{P}(\mathfrak{A})$ coincides with the set of
all extremal points of $\mathcal{S}(\mathfrak{A})$, thus
for every $x\in \mathfrak{A}$ we have
\begin{align*}
v(x) = \displaystyle{\sup_{\varphi \in \mathcal{S}(\mathfrak{A})}}|\varphi(x)|
= \displaystyle{\sup_{\varphi \in \mathcal{P}(\mathfrak{A})}}|\varphi (x)|.
\end{align*}

When $\mathfrak{A} = \mathbb{B}(\mathscr{H})$ is the $C^*$-algebra
of all bounded linear operators on a complex Hilbert space
$\big(\mathscr{H}, \langle \cdot, \cdot\rangle\big)$ and $T\in \mathbb{B}(\mathscr{H})$,
it well known that $V(T)$ is the closure of $W(T)$, the spatial numerical
range of $T$ defined by
$W(T) = \big\{\langle Tx, x\rangle:x\in \mathscr{H},\|x\| = 1\big\}.$
It is known as well that $W(T)$ is a nonempty bounded convex subset of $\mathbb{C}$
(not necessarily closed), and its supremum modulus, denoted by
$\omega(x) = \sup\{|z|: \, z\in W(T)\}$,
is called the spatial numerical radius of $T$ and coincides with $v(T)$.

For more material about the numerical radius and
other information on the basic theory of algebraic numerical range,
we refer the reader to \cite{B.D} and \cite{G.R}.
Some other related topics can be found in \cite{B.S, Dr, E.K, H.K.S, M.S, Sa, Sh, Z.2}.

Now, let $(\mathscr{X}, \|\cdot\|)$ be a normed space.
An element $x\in \mathscr{X}$ is said to be norm--parallel to another element
$y\in \mathscr{X}$ (see \cite{S, Z.M.1}), in short
$x\parallel y$, if
$\|x+\lambda y\|=\|x\|+\|y\|$ for some $\lambda\in\mathbb{T}$.
Here, as usual, $\mathbb{T}$ is the unit cycle of the complex plane $\mathbb{C}$.
In the context of continuous functions,
the well-known Daugavet equation $\|T + Id\| = \|T\| + 1$
is a particular case of parallelism. Here $Id$ denotes the identity function.
This property of a function, apart from being interesting in its own
right, arises naturally in problems dealing with best approximations in
function spaces; see \cite{We} and the references therein.
In the framework of inner product spaces, the norm--parallel relation
is exactly the usual vectorial parallel relation, that is,
$x\parallel y$ if and only if $x$ and $y$ are linearly dependent.
In the setting of normed linear spaces, two linearly
dependent vectors are norm--parallel, but the converse is false in general.

Some characterizations of the norm--parallelism for
operators on various Banach spaces and elements of an arbitrary Hilbert $C^*$-module
were given in \cite{B.C.M.W.Z, G, M.S.P, W, Z.1, Z.M.1, Z.M.2}.

Now, let us introduce a new type of parallelism in
$C^*$-algebras based on numerical radius.
\begin{definition}\label{D.1.2}
An element $x\in\mathfrak{A}$ is called the numerical radius parallel
to another element $y \in\mathfrak{A}$, denoted by $x\,{\parallel}_v \,y$, if
$v(x + \lambda x) = v(x) + v(y)$ for some $\lambda\in\mathbb{T}$.
\end{definition}
It is easy to see that the numerical radius parallelism is reflexive ($x\,{\parallel}_v \,x$),
symmetric ($x\,{\parallel}_v \,y$ if and only if
$y\,{\parallel}_v \,x$) and $\mathbb{R}$-homogenous
($x\,{\parallel}_v \,y \Rightarrow \alpha x\,{\parallel}_v \,\beta y$
for all $\alpha, \beta \in \mathbb{R}$)).
Notice that two linearly dependent elements are numerical
radius parallel. The converse is however not true, in general.

The organization of this paper will be as follows.
Inspired by the numerical radius inequalities of bounded linear operators
in \cite{A.K.1}, \cite{A.K.2}, \cite{K.1}, \cite{K.2}, \cite{K.3}, \cite{K.M.Y}, \cite{Y}
and by using some ideas of them,
we firstly state a useful characterization
of the numerical radius for elements of a $C^*$-algebra, as follows:
\begin{align*}
v(x) = \displaystyle{\sup_{\theta \in \mathbb{R}}}\|\mbox{Re}(e^{i\theta}x)\|.
\end{align*}
We then apply it to prove that several numerical
radius inequalities in $C^*$-algebras.
Moreover, we give new improvements of the inequalities (\ref{I.1.1}).

We also give an expression of $v(x)$ in terms of the real and imaginary
parts of $x\in \mathfrak{A}$, as follows:
\begin{align*}
v(x) = \displaystyle{\sup_{\alpha^2 + \beta^2 = 1}}\big\|\alpha \mbox{Re}(x) + \beta \mbox{Im}(x)\big\|.
\end{align*}
Particularly, then we show that if $x\in\mathfrak{A}$, then $v(x) = \frac{1}{2}\|x\|$
if and only if $\|x\| = \|\mbox{Re}(e^{i\theta}x)\| + \|\mbox{Im}(e^{i\theta}x)\|$
for all $\theta \in \mathbb{R}$.
Our results generalize recent numerical
radius inequalities of bounded linear operators due to Kittaneh et al.
\cite{K.2, K.M.Y, Y}.

In addition, we present a refinement of the triangle inequality
for the numerical radius in $C^*$-algebras.
We then apply it to give a necessary condition for the numerical radius parallelism.
Furthermore, for two elements $x$ and $y$ in $\mathfrak{A}$
we show that $x\,{\parallel}_v \,y$ if and only if there exists a pure state
$\varphi$ on $\mathfrak{A}$ such that $|\varphi(x)\varphi(y)| = v(x)v(y)$.
Finally, we prove that if $c\in \mathcal{Z}(\mathfrak{A})\cap \mathcal{U}(\mathfrak{A})$,
then $x\,{\parallel}_v \,y$ holds exactly when $cx\,{\parallel}_v \,cy$.
\section{Main results}
We start our work with the following lemma.
\begin{lemma}\label{L.2.1}
Let $\mathfrak{A}$ be a $C^*$-algebra and let $\varphi$ be a state over $\mathfrak{A}$.
For $x\in \mathfrak{A}$ the following statements hold.
\begin{itemize}
\item[(i)] $\displaystyle{\sup_{\theta \in \mathbb{R}}}\big|\mbox{Re}\big(e^{i\theta}\varphi(x)\big)\big| = |\varphi(x)|$.
\item[(ii)] $\displaystyle{\sup_{\theta \in \mathbb{R}}}\big|\mbox{Im}\big(e^{i\theta}\varphi(x)\big)\big| = |\varphi(x)|$.
\end{itemize}
\end{lemma}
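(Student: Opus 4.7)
The plan is to reduce both statements to the elementary fact that for any complex number $z$, the quantities $\sup_{\theta}|\mathrm{Re}(e^{i\theta}z)|$ and $\sup_{\theta}|\mathrm{Im}(e^{i\theta}z)|$ both equal $|z|$. Since $\varphi(x)$ is just a scalar, we may set $z = \varphi(x)$ and write $z = |z|e^{i\alpha}$ in polar form (allowing $\alpha$ to be arbitrary if $z = 0$, in which case both statements are trivially $0 = 0$).

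For (i), compute $e^{i\theta}\varphi(x) = |z|e^{i(\theta + \alpha)}$, so $\mathrm{Re}(e^{i\theta}\varphi(x)) = |z|\cos(\theta + \alpha)$. Then $|\mathrm{Re}(e^{i\theta}\varphi(x))| = |z|\,|\cos(\theta + \alpha)| \le |z|$ for every $\theta$, and equality is attained by choosing $\theta = -\alpha$, which gives $\cos(\theta + \alpha) = 1$. Taking supremum over $\theta \in \mathbb{R}$ yields the desired equality $|\varphi(x)|$.

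For (ii), the argument is completely analogous: $\mathrm{Im}(e^{i\theta}\varphi(x)) = |z|\sin(\theta+\alpha)$, so $|\mathrm{Im}(e^{i\theta}\varphi(x))| \le |z|$, with equality attained at $\theta = \tfrac{\pi}{2} - \alpha$, where $\sin(\theta+\alpha) = 1$. Hence the supremum again equals $|\varphi(x)|$.

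There is no substantial obstacle here; the lemma is purely a statement about a single complex scalar. The only thing to be careful about is the degenerate case $\varphi(x)=0$, which is handled by the remark that both sides vanish. The role of $\varphi$ being a state (as opposed to an arbitrary functional) is not used anywhere in the argument — only the fact that $\varphi(x)\in\mathbb{C}$.
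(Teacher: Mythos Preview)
Your proof is correct and essentially the same as the paper's: both recognize that the lemma is a statement about the single complex scalar $\varphi(x)$, choose a specific rotation making $e^{i\theta}\varphi(x)$ real and positive to attain the supremum, and bound the other direction by $|\mathrm{Re}(w)|\le|w|$ (equivalently $|\cos|\le 1$). The only cosmetic difference is that for (ii) the paper reduces to (i) via the substitution $x\mapsto ix$, whereas you repeat the argument directly with $\sin$; this is not a material distinction.
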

\begin{proof}
We may assume that $\varphi(x) \neq 0$ otherwise (i) and (ii) trivially hold.

(i) Put $e^{i{\theta}_0} = \frac{\overline{\varphi(x)}}{|\varphi(x)|}$.
Then we have
\begin{align*}
|\varphi(x)| = \big|\mbox{Re}\big(e^{i{\theta}_0}\varphi(x)\big)\big|
\leq \sup_{\theta \in \mathbb{R}}\big|\mbox{Re}\big(e^{i\theta}\varphi(x)\big)\big|
\leq \sup_{\theta \in \mathbb{R}}\big|e^{i\theta}\varphi(x)\big| = |\varphi(x)|,
\end{align*}
and hence $|\varphi(x)| = \sup_{\theta \in \mathbb{R}}\big|\mbox{Re}\big(e^{i\theta}\varphi(x)\big)\big|$.

(ii) By replacing $x$ in (i) by $ix$, we obtain
\begin{align*}
\sup_{\theta \in \mathbb{R}}\big|\mbox{Im}\big(e^{i\theta}\varphi(x)\big)\big|
= \sup_{\theta \in \mathbb{R}}\big|\mbox{Re}\big(e^{i\theta}\varphi(ix)\big)\big|
= |\varphi(ix)| = |\varphi(x)|.
\end{align*}
\end{proof}
Now, we are in a position to state two useful characterizations
of the numerical radius for elements of a $C^*$-algebra.
\begin{theorem}\label{T.2.2}
Let $\mathfrak{A}$ be a $C^*$-algebra.
For $x\in \mathfrak{A}$ the following statements hold.
\begin{itemize}
\item[(i)] $\displaystyle{\sup_{\theta \in \mathbb{R}}}\|\mbox{Re}(e^{i\theta}x)\| = v(x)$.
\item[(ii)] $\displaystyle{\sup_{\theta \in \mathbb{R}}}\|\mbox{Im}(e^{i\theta}x)\| = v(x)$.
\end{itemize}
\end{theorem}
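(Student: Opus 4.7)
The plan is to establish (i) by matching upper and lower bounds, and then to deduce (ii) from (i) by a rotation of the parameter $\theta$.

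For the upper bound in (i), I would fix $\theta\in\mathbb{R}$ and exploit that $\mbox{Re}(e^{i\theta}x)$ is self-adjoint, hence normal. The paper has already recorded that $\|z\|=v(z)$ for normal $z$, so
\[
\big\|\mbox{Re}(e^{i\theta}x)\big\| = v\big(\mbox{Re}(e^{i\theta}x)\big) = \sup_{\varphi\in\mathcal{S}(\mathfrak{A})}\big|\varphi\big(\mbox{Re}(e^{i\theta}x)\big)\big|.
\]
Since every state satisfies $\varphi(y^*)=\overline{\varphi(y)}$, the functional $\varphi$ slides past $\mbox{Re}$, giving $\varphi(\mbox{Re}(e^{i\theta}x)) = \mbox{Re}(e^{i\theta}\varphi(x))$, whose modulus is bounded by $|\varphi(x)|\le v(x)$. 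This bound is uniform in $\theta$ and yields $\sup_{\theta}\|\mbox{Re}(e^{i\theta}x)\| \le v(x)$.

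For the reverse inequality I would swap the two suprema and feed the inner one into Lemma~\ref{L.2.1}(i):
\[
\sup_{\theta\in\mathbb{R}}\big\|\mbox{Re}(e^{i\theta}x)\big\| = \sup_{\varphi\in\mathcal{S}(\mathfrak{A})}\sup_{\theta\in\mathbb{R}}\big|\mbox{Re}(e^{i\theta}\varphi(x))\big| = \sup_{\varphi\in\mathcal{S}(\mathfrak{A})}|\varphi(x)| = v(x).
\]
The interchange is unconditional because $\theta$ and $\varphi$ range over independent sets. Together with the upper bound, this proves (i).

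Part (ii) then follows from (i) by the substitution $x\mapsto ix$. A one-line computation shows $\mbox{Re}(i\,e^{i\theta}x) = i\,\mbox{Im}(e^{i\theta}x)$, hence $\|\mbox{Re}(e^{i(\theta+\pi/2)}x)\| = \|\mbox{Im}(e^{i\theta}x)\|$; taking the supremum over $\theta$ and invoking (i) gives $\sup_{\theta}\|\mbox{Im}(e^{i\theta}x)\| = v(x)$.

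I do not foresee any serious obstacle: the argument is a direct assembly of three ingredients already on the table, namely (a) $\|\cdot\|=v(\cdot)$ on normal elements, (b) the compatibility $\varphi\circ\mbox{Re}=\mbox{Re}\circ\varphi$ for states, and (c) Lemma~\ref{L.2.1}. The only point deserving mild care is the interchange of suprema, which is immediate since the indexing sets are independent.
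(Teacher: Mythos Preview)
Your argument is essentially the paper's own proof: both rewrite $\|\mbox{Re}(e^{i\theta}x)\|$ as $v(\mbox{Re}(e^{i\theta}x))$, pass the state through $\mbox{Re}$, swap the two suprema, and invoke Lemma~\ref{L.2.1}(i); part (ii) is then obtained in both cases by the substitution $x\mapsto ix$. One small slip: the identity you state for (ii) should read $\mbox{Re}(i\,e^{i\theta}x) = -\,\mbox{Im}(e^{i\theta}x)$ (the left-hand side is self-adjoint, so the right-hand side cannot be $i\,\mbox{Im}(e^{i\theta}x)$), but since only the norm is used this does not affect the conclusion.
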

\begin{proof}
(i) Since $\mbox{Re}(e^{i\theta}x)$ is self adjoint for any $\theta \in \mathbb{R}$,
we have
\begin{align*}
\|\mbox{Re}(e^{i\theta}x)\| = v(\mbox{Re}(e^{i\theta}x)).
\end{align*}
Therefore, we get
\begin{align*}
\sup_{\theta \in \mathbb{R}}\|\mbox{Re}(e^{i\theta}x)\| &= \sup_{\theta \in \mathbb{R}}v(\mbox{Re}(e^{i\theta}x))
\\& = \sup_{\theta \in \mathbb{R}}\sup_{\varphi\in \mathcal{S}(\mathfrak{A})}|\varphi\big(\mbox{Re}(e^{i\theta}x)\big)|
\\& = \sup_{\theta \in \mathbb{R}}\sup_{\varphi\in \mathcal{S}(\mathfrak{A})}|\mbox{Re}(e^{i\theta}\varphi(x))|
\\& = \sup_{\varphi\in \mathcal{S}(\mathfrak{A})}\sup_{\theta \in \mathbb{R}}|\mbox{Re}(e^{i\theta}\varphi(x))|
\\& = \sup_{\varphi\in \mathcal{S}(\mathfrak{A})}|\varphi(x)|
\qquad\big(\mbox{by Lemma \ref{L.2.1} (i)}\big)
\\& = v(x).
\end{align*}
Thus $\displaystyle{\sup_{\theta \in \mathbb{R}}}\|\mbox{Re}(e^{i\theta}x)\| = v(x)$.

(ii) By replacing $x$ in (i) by $ix$, we reach that
\begin{align*}
\sup_{\theta \in \mathbb{R}}\|\mbox{Im}(e^{i\theta}x)\|
= \sup_{\theta \in \mathbb{R}}\|\mbox{Re}(e^{i\theta}(ix))\|
= v(ix) = v(x).
\end{align*}
\end{proof}
Recall that the Crawford number of $T\in\mathbb{B}(\mathscr{H})$ is defined by
\begin{align*}
c(T) = \inf \{|\langle Tx, x\rangle|:x\in \mathscr{H},\|x\| =1\}.
\end{align*}
This concept is useful in studying linear operators (e.g., see \cite{A.K.1, Z.2}, and their references).
The Crawford number of $z\in \mathfrak{A}$ can be defined by
\begin{align*}
c(z) = \inf\{|\varphi(z)|: \, \varphi \in \mathcal{S}(\mathfrak{A})\}.
\end{align*}
In the following theorem, we give new improvement of the inequalities (\ref{I.1.1}).
\begin{theorem}\label{T.2.3}
Let $\mathfrak{A}$ be a $C^*$-algebra.
For $x\in \mathfrak{A}$ the following statements hold.
\begin{itemize}
\item[(i)] $\frac{1}{2}\|x\| \leq \frac{1}{2}\sqrt{\big\|\,|x|^2 + |x^*|^2\big\| + 2c(x^2)} \leq v(x)$.
\item[(ii)] $v(x) \leq \frac{1}{2}\sqrt{\big\|\,|x|^2 + |x^*|^2\big\| + 2v(x^2)}
\leq \frac{1}{2}\big(\|x\| + {\|x^2\|}^{\frac{1}{2}}\big) \leq \|x\|$.
\end{itemize}
\end{theorem}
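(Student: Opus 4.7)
The engine is Theorem~\ref{T.2.2}(i) combined with the algebraic identity
\[
[\mathrm{Re}(e^{i\theta}x)]^{2} \;=\; \tfrac{1}{4}(x^{*}x+xx^{*}) \;+\; \tfrac{1}{2}\mathrm{Re}(e^{2i\theta}x^{2}),
\]
which follows by expanding $\bigl(\tfrac{1}{2}(e^{i\theta}x+e^{-i\theta}x^{*})\bigr)^{2}$. Since $\mathrm{Re}(e^{i\theta}x)$ is self-adjoint, $\|\mathrm{Re}(e^{i\theta}x)\|^{2}=\|[\mathrm{Re}(e^{i\theta}x)]^{2}\|$, and this element is positive, so its norm equals $\sup_{\varphi\in\mathcal{S}(\mathfrak{A})}\varphi([\mathrm{Re}(e^{i\theta}x)]^{2})$. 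Substituting the identity, noting that $\sup_{\theta}\mathrm{Re}(e^{2i\theta}\varphi(x^{2}))=|\varphi(x^{2})|$ (in the spirit of Lemma~\ref{L.2.1}), and interchanging the two suprema, I would extract the workhorse formula
\[
v(x)^{2}\;=\;\sup_{\varphi\in\mathcal{S}(\mathfrak{A})}\Bigl[\tfrac{1}{4}\varphi(x^{*}x+xx^{*})+\tfrac{1}{2}|\varphi(x^{2})|\Bigr],
\]
which feeds both (i) and (ii).

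For (i), the leftmost inequality is immediate from $\|x\|^{2}=\|x^{*}x\|\le\|x^{*}x+xx^{*}\|$ (since $xx^{*}\ge 0$) together with $c(x^{2})\ge 0$. For the second inequality in (i), I would choose by weak-$*$ compactness of $\mathcal{S}(\mathfrak{A})$ a state $\varphi_{0}$ attaining $\varphi_{0}(x^{*}x+xx^{*})=\||x|^{2}+|x^{*}|^{2}\|$, apply the workhorse formula at $\varphi_{0}$, and use $|\varphi_{0}(x^{2})|\ge c(x^{2})$ to conclude $4v(x)^{2}\ge\||x|^{2}+|x^{*}|^{2}\|+2c(x^{2})$.

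For (ii), the first inequality follows by applying the triangle inequality inside $\|[\mathrm{Re}(e^{i\theta}x)]^{2}\|$ and then taking $\sup_{\theta}$, using Theorem~\ref{T.2.2}(i) applied to $x^{2}$ for the second term. The rightmost inequality $\tfrac{1}{2}(\|x\|+\|x^{2}\|^{1/2})\le\|x\|$ is immediate from $\|x^{2}\|^{1/2}\le\|x\|$. The middle inequality is the main obstacle: after squaring it reads
\[
\||x|^{2}+|x^{*}|^{2}\|+2v(x^{2})\;\le\;\|x\|^{2}+2\|x\|\,\|x^{2}\|^{1/2}+\|x^{2}\|.
\]
The cross term is handled by $v(x^{2})\le\|x^{2}\|=\|x^{2}\|^{1/2}\|x^{2}\|^{1/2}\le\|x\|\,\|x^{2}\|^{1/2}$, but the naive bound $\||x|^{2}+|x^{*}|^{2}\|\le 2\|x\|^{2}$ is too loose by exactly the AM--GM gap $(\|x\|-\|x^{2}\|^{1/2})^{2}$, so it must be sharpened to $\||x|^{2}+|x^{*}|^{2}\|\le\|x\|^{2}+\|x^{2}\|$. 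I would prove this by the block-matrix trick in $M_{2}(\mathfrak{A})$: with $A=[\,x\ \ x^{*}\,]$, one has $AA^{*}=x^{*}x+xx^{*}$ and $\|AA^{*}\|=\|A^{*}A\|$, while $A^{*}A$ decomposes as the diagonal $\mathrm{diag}(x^{*}x,xx^{*})$, of norm $\|x\|^{2}$, plus the self-adjoint anti-diagonal with entries $(x^{*})^{2}$ and $x^{2}$, whose square is diagonal of norm $\|x^{2}\|^{2}$, hence of norm $\|x^{2}\|$; the triangle inequality in $M_{2}(\mathfrak{A})$ then closes the argument.
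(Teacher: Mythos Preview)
Your argument is correct and follows essentially the same route as the paper: both hinge on the identity $[\mathrm{Re}(e^{i\theta}x)]^{2}=\tfrac{1}{4}(x^{*}x+xx^{*})+\tfrac{1}{2}\mathrm{Re}(e^{2i\theta}x^{2})$ together with Theorem~\ref{T.2.2}(i), choosing a state attaining $\|\,|x|^{2}+|x^{*}|^{2}\|$ for part~(i) and using the triangle inequality plus the bound $\|\,|x|^{2}+|x^{*}|^{2}\|\le\|x\|^{2}+\|x^{2}\|$ for part~(ii). Your block-matrix justification of this last bound via $A=[\,x\ \ x^{*}\,]$ in $M_{2}(\mathfrak{A})$ is in fact a useful addition, since the paper simply passes from $\|\,|x|^{2}+|x^{*}|^{2}\|+2v(x^{2})$ to $\|x\|^{2}+\|x^{2}\|+2v(x^{2})$ without comment.
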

\begin{proof}
(i) Let $x\in \mathfrak{A}$. By \cite[Theorem 3.3.6]{Mur} there is a state $\varphi$ over $\mathfrak{A}$ such that
\begin{align}\label{I.2.3.1}
\varphi\big(|x|^2 + |x^*|^2\big) = \big\|\,|x|^2 + |x^*|^2\big\|.
\end{align}
Let ${\theta}_0$ be a real number such that $|\varphi(x^2)| = e^{2i{\theta}_0}\varphi(x^2)$. Then, by Theorem \ref{T.2.2} (i), we have
\begin{align*}
v(x) \geq \|\mbox{Re}(e^{i{\theta}_0}x)\| &= \frac{1}{2}\|e^{i{\theta}_0}x + e^{-i{\theta}_0}x^*\|
\\& = \frac{1}{2}\sqrt{\big\|\big(e^{i{\theta}_0}x + e^{-i{\theta}_0}x^*\big)\big(e^{i{\theta}_0}x + e^{-i{\theta}_0}x^*\big)^*\big\|}
\\& = \frac{1}{2}\sqrt{\big\|\,|x|^2 + |x^*|^2 + 2\mbox{Re}(e^{2i{\theta}_0}x^2)\big\|}
\\& \geq \frac{1}{2}\sqrt{\big|\varphi\big(|x|^2 + |x^*|^2 + 2\mbox{Re}(e^{2i{\theta}_0}x^2)\big)\big|}
\\& = \frac{1}{2}\sqrt{\big|\varphi\big(|x|^2 + |x^*|^2\big) + 2\mbox{Re}\big(e^{2i{\theta}_0}\varphi(x^2)\big)\big|}
\\& = \frac{1}{2}\sqrt{\big\|\,|x|^2 + |x^*|^2\big\| + 2|\varphi(x^2)|} \qquad\big(\mbox{by (\ref{I.2.3.1})}\big)
\\& \geq \frac{1}{2}\sqrt{\big\|\,|x|^2 + |x^*|^2\big\| + 2c(x^2)} \geq \frac{1}{2}\|x\|,
\end{align*}
which proves the inequalities in (i).

(ii) By Theorem \ref{T.2.2} (i), as in the proof of (i) we get
\begin{align*}
v(x) &= \sup_{\theta \in \mathbb{R}}\|\mbox{Re}(e^{i\theta}x)\|
\\& = \frac{1}{2}\sup_{\theta \in \mathbb{R}}\sqrt{\big\|\,|x|^2 + |x^*|^2 + 2\mbox{Re}(e^{2i\theta}x^2)\big\|}
\\& \leq \frac{1}{2}\sup_{\theta \in \mathbb{R}}\sqrt{\big\|\,|x|^2 + |x^*|^2\| + 2\|\mbox{Re}(e^{2i\theta}x^2)\big\|}
\\& \leq \frac{1}{2}\sqrt{\big\|\,|x|^2 + |x^*|^2\| + 2\sup_{\theta \in \mathbb{R}}\|\mbox{Re}(e^{2i\theta}x^2)\big\|}
\\& = \frac{1}{2}\sqrt{\big\|\,|x|^2 + |x^*|^2\big\| + 2v(x^2)}
\\& \leq \frac{1}{2}\sqrt{\|x\|^2 + \|x^2\| + 2v(x^2)}
\\& \leq \frac{1}{2}\sqrt{\|x\|^2 + 3\|x^2\|} \qquad\big(\mbox{by (\ref{I.1.1})}\big)
\\& \leq \frac{1}{2}\sqrt{\|x\|^2 + 2\|x\|{\|x^2\|}^{\frac{1}{2}} + \|x^2\|}
\qquad\big(\mbox{since $\|x^2\| = {\|x^2\|}^{\frac{1}{2}}{\|x^2\|}^{\frac{1}{2}} \leq \|x\|{\|x^2\|}^{\frac{1}{2}}$}\big)
\\& = \frac{1}{2}\big(\|x\| + {\|x^2\|}^{\frac{1}{2}}\big) \leq \|x\|,
\end{align*}
which proves the inequalities in (ii).
\end{proof}
As a consequence of Theorem \ref{T.2.3}, we have the following result.
\begin{corollary}\label{C.2.4}
Let $\mathfrak{A}$ be a $C^*$-algebra. If $x\in \mathfrak{A}$ is such that $x^2 = 0$,
then $v(x) = \frac{1}{2}\|x\|$.
\end{corollary}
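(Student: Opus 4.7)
The statement falls squarely out of Theorem \ref{T.2.3}, so I expect the argument to be a short verification rather than a genuine proof. The strategy is to combine the trivial lower bound with the upper bound already established in Theorem \ref{T.2.3}(ii) and observe that the hypothesis $x^2=0$ kills the only term that prevents $v(x)$ from collapsing onto $\tfrac{1}{2}\|x\|$.

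First, the inequality $v(x) \geq \tfrac{1}{2}\|x\|$ is free: it is exactly the left half of the standard estimate (\ref{I.1.1}), which holds for every $x\in\mathfrak{A}$. For the matching upper bound I would simply read off the rightmost chain in Theorem \ref{T.2.3}(ii), namely $v(x) \leq \tfrac{1}{2}\bigl(\|x\| + \|x^2\|^{1/2}\bigr)$. Setting $x^2=0$ makes $\|x^2\|^{1/2}=0$ and reduces this to $v(x) \leq \tfrac{1}{2}\|x\|$. Combining the two bounds gives the claimed equality.

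If one prefers a tighter route that does not lean on the last slack estimate of Theorem \ref{T.2.3}(ii), one can instead use the intermediate bound $v(x) \leq \tfrac{1}{2}\sqrt{\bigl\||x|^2 + |x^*|^2\bigr\| + 2v(x^2)}$ proved there. Under $x^2=0$, clearly $v(x^2)=0$; moreover $(x^*x)(xx^*) = x^*(x^2)x^* = 0$, so the positive elements $|x|^2$ and $|x^*|^2$ are mutually orthogonal. For positive $a,b\in\mathfrak{A}$ with $ab=0$ one has $\|a+b\| = \max\{\|a\|,\|b\|\}$ (standard via the Gelfand representation of the commutative $C^*$-subalgebra generated by $a$ and $b$), hence $\bigl\||x|^2+|x^*|^2\bigr\| = \|x\|^2$ and again $v(x) \leq \tfrac{1}{2}\|x\|$.

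There is no real obstacle here; the only point worth double-checking on the second route is the orthogonality identity for the norms of positive elements, and on the first route one simply has to notice that the bound $v(x) \leq \tfrac{1}{2}(\|x\|+\|x^2\|^{1/2})$ is already in the paper. I would therefore present the first, one-line derivation as the proof.
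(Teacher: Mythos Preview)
Your primary argument is exactly the paper's proof: use $v(x)\le \tfrac12\bigl(\|x\|+\|x^2\|^{1/2}\bigr)$ from Theorem~\ref{T.2.3}(ii), set $x^2=0$, and combine with the standard lower bound (\ref{I.1.1}). The alternative route you sketch via the intermediate estimate and orthogonality of $|x|^2$ and $|x^*|^2$ is also correct, but the paper opts for the same one-line derivation you recommend.
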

\begin{proof}
Since $x^2 = 0$, by Theorem \ref{T.2.3} (ii), we obtain
$v(x) \leq \frac{1}{2}\big(\|x\| + {\|x^2\|}^{\frac{1}{2}}\big) = \frac{1}{2}\|x\|$.
We also have that $\frac{1}{2}\|x\| \leq v(x)$ for every $x\in \mathfrak{A}$.
Thus $v(x) = \frac{1}{2}\|x\|$.
\end{proof}
The following result is another consequence of Theorem \ref{T.2.3}.
\begin{corollary}\label{C.2.5}
Let $\mathfrak{A}$ be a $C^*$-algebra. If $x\in \mathfrak{A}$ is such that $v(x) = \|x\|$,
then $\|x^2\| = {\|x\|}^2$.
\end{corollary}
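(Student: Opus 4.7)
The plan is to extract the conclusion directly from the refined chain of inequalities already established in Theorem \ref{T.2.3} (ii). Specifically, I would focus on its outermost comparison, namely $v(x) \leq \frac{1}{2}\bigl(\|x\| + \|x^2\|^{1/2}\bigr) \leq \|x\|$, and observe that under the saturation hypothesis $v(x) = \|x\|$ one of these inequalities must collapse to an equality.

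First, I would insert $v(x) = \|x\|$ into the left-hand inequality to obtain $\|x\| \leq \frac{1}{2}\bigl(\|x\| + \|x^2\|^{1/2}\bigr)$. A one-step rearrangement yields $\|x\| \leq \|x^2\|^{1/2}$, and hence $\|x\|^2 \leq \|x^2\|$. Then I would combine this with the elementary submultiplicativity $\|x^2\| \leq \|x\|^2$, which holds in any Banach algebra and in particular in $\mathfrak{A}$, to conclude $\|x^2\| = \|x\|^2$.

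There is essentially no obstacle here: the statement is a direct consequence of Theorem \ref{T.2.3} (ii), and the only real content is identifying which link in the chain must become an equality once $v(x)$ saturates its upper bound $\|x\|$. The intermediate refinement involving $\frac{1}{2}\sqrt{\bigl\|\,|x|^2 + |x^*|^2\bigr\| + 2v(x^2)}$ plays no role in this corollary; the outer sandwich alone carries the argument.
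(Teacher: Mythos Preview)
Your proof is correct and follows essentially the same route as the paper: both substitute the hypothesis $v(x)=\|x\|$ into the outer inequality $v(x)\leq \tfrac{1}{2}\bigl(\|x\|+\|x^2\|^{1/2}\bigr)\leq \|x\|$ from Theorem~\ref{T.2.3}\,(ii) to force $\|x\|=\|x^2\|^{1/2}$. The only cosmetic difference is that the paper reads off the equality directly from the resulting sandwich, whereas you isolate $\|x\|^2\leq\|x^2\|$ and then pair it with submultiplicativity; these are the same argument.
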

\begin{proof}
It follows from Theorem \ref{T.2.2} (ii) that $v(x) = \|x\|$ implies
$\|x\| \leq \frac{1}{2}\big(\|x\| + {\|x^2\|}^{\frac{1}{2}}\big) \leq \|x\|$.
Thus $\|x\| = {\|x^2\|}^{\frac{1}{2}}$, or equivalently $\|x^2\| = {\|x\|}^2$.
\end{proof}
In the following theorem we give an expression of $v(x)$
in terms of the real and imaginary parts of $x\in \mathfrak{A}$.
\begin{theorem}\label{T.2.6}
Let $\mathfrak{A}$ be a $C^*$-algebra and let $x\in \mathfrak{A}$.
Then for $\alpha , \beta \in \mathbb{R}$, the following statements hold.
\begin{itemize}
\item[(i)] $\displaystyle{\sup_{\alpha^2 + \beta^2 = 1}}\big\|\alpha \mbox{Re}(x) + \beta \mbox{Im}(x)\big\| = v(x)$.
\item[(ii)] $\max\big\{\|\mbox{Re}(x)\|, \|\mbox{Im}(x)\|\big\}\leq v(x)$.
\end{itemize}
\end{theorem}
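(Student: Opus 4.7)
The plan is to derive both statements directly from Theorem \ref{T.2.2}, specifically from the identity $v(x) = \sup_{\theta \in \mathbb{R}} \|\mbox{Re}(e^{i\theta}x)\|$, by expanding $\mbox{Re}(e^{i\theta}x)$ in terms of $\mbox{Re}(x)$ and $\mbox{Im}(x)$.

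First I would write $x = \mbox{Re}(x) + i\,\mbox{Im}(x)$ and compute
\begin{align*}
e^{i\theta}x = \bigl(\cos\theta + i\sin\theta\bigr)\bigl(\mbox{Re}(x) + i\,\mbox{Im}(x)\bigr),
\end{align*}
which yields
\begin{align*}
\mbox{Re}(e^{i\theta}x) = \cos\theta\,\mbox{Re}(x) - \sin\theta\,\mbox{Im}(x).
\end{align*}
This is where I would rely on the fact that $\mbox{Re}(x)$ and $\mbox{Im}(x)$ are self-adjoint, so this expression lives in the self-adjoint part of $\mathfrak{A}$ and its norm is well behaved.

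For (i), I would substitute the above into Theorem \ref{T.2.2}(i) to get
\begin{align*}
v(x) = \sup_{\theta \in \mathbb{R}} \bigl\|\cos\theta\,\mbox{Re}(x) - \sin\theta\,\mbox{Im}(x)\bigr\|,
\end{align*}
and then observe that the map $\theta \mapsto (\cos\theta, -\sin\theta)$ traces out the entire unit circle in $\mathbb{R}^2$. Hence setting $\alpha = \cos\theta$ and $\beta = -\sin\theta$, the supremum over $\theta \in \mathbb{R}$ coincides with the supremum over all pairs $(\alpha,\beta) \in \mathbb{R}^2$ with $\alpha^2 + \beta^2 = 1$, giving the desired equality.

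For (ii), I would simply specialize (i) to the two coordinate pairs: taking $(\alpha,\beta) = (1,0)$ yields $\|\mbox{Re}(x)\| \leq v(x)$, while $(\alpha,\beta) = (0,1)$ yields $\|\mbox{Im}(x)\| \leq v(x)$; the maximum of the two is therefore bounded by $v(x)$. Honestly, there is no real obstacle here — the whole argument is a one-line trigonometric reparametrization once Theorem \ref{T.2.2} is in hand; the only small care needed is the sign on the $\beta$ component of the parametrization, but that is immaterial for the supremum since $\alpha^2 + \beta^2 = 1$ is invariant under $\beta \mapsto -\beta$.
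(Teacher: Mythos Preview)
Your proof is correct and is essentially identical to the paper's own argument: both expand $\mbox{Re}(e^{i\theta}x)=\cos\theta\,\mbox{Re}(x)-\sin\theta\,\mbox{Im}(x)$, set $(\alpha,\beta)=(\cos\theta,-\sin\theta)$, invoke Theorem~\ref{T.2.2}(i), and then specialize to $(1,0)$ and $(0,1)$ for part~(ii). The only cosmetic difference is that the paper computes $\mbox{Re}(e^{i\theta}x)$ from the formula $\frac{e^{i\theta}x+e^{-i\theta}x^*}{2}$ rather than from $x=\mbox{Re}(x)+i\,\mbox{Im}(x)$, which is immaterial.
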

\begin{proof}
(i) Let $\theta \in \mathbb{R}$. Put $\alpha = \cos \theta$ and $\beta = -\sin \theta$.
We have
\begin{align*}
\mbox{Re}(e^{i\theta}x) &= \frac{e^{i\theta}x + e^{-i\theta}x^*}{2}
\\& = \frac{(\cos \theta + i\sin \theta)x + (\cos \theta -i \sin \theta)x^*}{2}
\\& = (\cos \theta)\frac{x + x^*}{2} - (\sin \theta)\frac{x - x^*}{2i}
\\& = \alpha \mbox{Re}(x) + \beta \mbox{Im}(x).
\end{align*}
Therefore
\begin{align*}
\sup_{\theta \in \mathbb{R}}\|\mbox{Re}(e^{i\theta}x)\|
= \sup_{\alpha^2 + \beta^2 = 1}\big\|\alpha \mbox{Re}(x) + \beta \mbox{Im}(x)\big\|,
\end{align*}
and hence by Theorem \ref{T.2.2} (i) we obtain
$v(x) = \displaystyle{\sup_{\alpha^2 + \beta^2 = 1}}\big\|\alpha \mbox{Re}(x) + \beta \mbox{Im}(x)\big\|$.

(ii) By setting $(\alpha, \beta) = (1, 0)$ and $(\alpha, \beta) =(0, 1)$ in (i), we get
$\|\mbox{Re}(x)\| \leq v(x)$ and $\|\mbox{Im}(x)\| \leq v(x)$. Thus $\max\big\{\|\mbox{Re}(x)\|, \|\mbox{Im}(x)\|\big\}\leq v(x)$.
\end{proof}
In the next result, we obtain a necessary
and sufficient condition for $v(x) = \frac{1}{2}\|x\|$ to hold.
We will need the following lemma.
\begin{lemma}\cite[Corollary 4.4]{Z.M.1}\label{L.2.7}
Let $\mathfrak{A}$ be a $C^*$-algebra and let $x, y\in \mathfrak{A}$.
Then the following statements are equivalent:
\begin{itemize}
\item[(i)] $x\parallel y$.
\item[(ii)] There exists a state $\varphi$ over $\mathfrak{A}$ such that $|\varphi(x^*y)| = \|x\|\|y\|$.
\end{itemize}
\end{lemma}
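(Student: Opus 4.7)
The plan is to establish the two implications by combining two standard facts about $C^*$-algebras: the Cauchy--Schwarz inequality for states, $|\varphi(a^*b)|^2 \le \varphi(a^*a)\varphi(b^*b)$, and the existence of a norming state (as already used in the proof of Theorem \ref{T.2.3}), namely that for any positive $z \in \mathfrak{A}$ there is a state $\varphi$ with $\varphi(z) = \|z\|$.

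For the direction (i) $\Rightarrow$ (ii), I would assume $\|x + \lambda y\| = \|x\| + \|y\|$ for some $\lambda \in \mathbb{T}$ and pick, via \cite[Theorem 3.3.6]{Mur}, a state $\varphi$ that norms the positive element $(x+\lambda y)^*(x+\lambda y)$, so that $\varphi\bigl((x+\lambda y)^*(x+\lambda y)\bigr) = \|x + \lambda y\|^2 = (\|x\| + \|y\|)^2$. Expanding this gives
\[
\varphi(x^*x) + 2\mbox{Re}\bigl(\lambda\varphi(x^*y)\bigr) + \varphi(y^*y) = (\|x\| + \|y\|)^2.
\]
Since $\varphi(x^*x) \le \|x\|^2$, $\varphi(y^*y) \le \|y\|^2$, and $2\mbox{Re}(\lambda\varphi(x^*y)) \le 2|\varphi(x^*y)| \le 2\sqrt{\varphi(x^*x)\varphi(y^*y)} \le 2\|x\|\|y\|$ by Cauchy--Schwarz, every one of these estimates is forced to be an equality; in particular $|\varphi(x^*y)| = \|x\|\|y\|$.

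For the converse (ii) $\Rightarrow$ (i), suppose $\varphi$ is a state with $|\varphi(x^*y)| = \|x\|\|y\|$. Applying Cauchy--Schwarz, I would first deduce $\varphi(x^*x) = \|x\|^2$ and $\varphi(y^*y) = \|y\|^2$, since the chain $\|x\|^2\|y\|^2 = |\varphi(x^*y)|^2 \le \varphi(x^*x)\varphi(y^*y) \le \|x\|^2\|y\|^2$ forces equality throughout. Then, choosing $\lambda \in \mathbb{T}$ with $\lambda\varphi(x^*y) = |\varphi(x^*y)|$, I would estimate
\[
\|x + \lambda y\|^2 \ge \varphi\bigl((x + \lambda y)^*(x + \lambda y)\bigr) = \|x\|^2 + 2\|x\|\|y\| + \|y\|^2 = (\|x\| + \|y\|)^2,
\]
and combine with the triangle inequality $\|x + \lambda y\| \le \|x\| + \|y\|$ to conclude $\|x + \lambda y\| = \|x\|+\|y\|$, i.e., $x \parallel y$.

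The only subtle step is the selection of the norming state in the first implication; this relies on the fact, already used in Theorem \ref{T.2.3}, that every positive element of a $C^*$-algebra admits a state attaining its norm. Once this is available, both directions reduce to careful bookkeeping with Cauchy--Schwarz and the triangle inequality, together with the choice of $\lambda$ to align the phase of $\varphi(x^*y)$.
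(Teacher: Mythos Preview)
The paper does not supply its own proof of this lemma: it is quoted verbatim as \cite[Corollary~4.4]{Z.M.1} and used as a black box in the proof of Theorem~\ref{T.2.8}. So there is no in-paper argument to compare against.

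Your argument is correct and is essentially the standard one. A couple of small points worth tightening: in (ii)$\Rightarrow$(i), the deduction $\varphi(x^*x)=\|x\|^2$ and $\varphi(y^*y)=\|y\|^2$ from $\varphi(x^*x)\varphi(y^*y)=\|x\|^2\|y\|^2$ uses that each factor is bounded above by the corresponding square \emph{and} that neither $\|x\|$ nor $\|y\|$ vanishes; the degenerate case $x=0$ or $y=0$ should be disposed of separately (it is trivial, since then $x\parallel y$ for any $\lambda$). Likewise, the choice of $\lambda$ with $\lambda\varphi(x^*y)=|\varphi(x^*y)|$ requires $\varphi(x^*y)\neq 0$, which again reduces to the same trivial case. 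In (i)$\Rightarrow$(ii), your phrase ``every one of these estimates is forced to be an equality'' is slightly imprecise: what the chain actually yields directly is $\mathrm{Re}\big(\lambda\varphi(x^*y)\big)=\|x\|\,\|y\|$, and then $|\varphi(x^*y)|=\|x\|\,\|y\|$ follows from $\mathrm{Re}(\lambda\varphi(x^*y))\le |\varphi(x^*y)|\le \sqrt{\varphi(x^*x)\varphi(y^*y)}\le \|x\|\,\|y\|$. These are cosmetic fixes; the substance of your proof is sound.
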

\begin{theorem}\label{T.2.8}
Let $\mathfrak{A}$ be a $C^*$-algebra and let $x\in \mathfrak{A}$.
Then the following statements are equivalent:
\begin{itemize}
\item[(i)] $v(x) = \frac{1}{2}\|x\|$.
\item[(ii)] $\|x\| = \|\mbox{Re}(e^{i\theta}x)\| + \|\mbox{Im}(e^{i\theta}x)\|$
for all $\theta \in \mathbb{R}$.
\end{itemize}
\end{theorem}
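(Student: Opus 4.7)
The $(i) \Rightarrow (ii)$ direction is immediate: assuming $v(x) = \frac{1}{2}\|x\|$, Theorem \ref{T.2.6}(ii) applied to $e^{i\theta}x$ bounds both $\|\mbox{Re}(e^{i\theta}x)\|$ and $\|\mbox{Im}(e^{i\theta}x)\|$ by $v(e^{i\theta}x) = v(x) = \frac{1}{2}\|x\|$, so their sum is at most $\|x\|$; the reverse inequality is the triangle inequality applied to $e^{i\theta}x = \mbox{Re}(e^{i\theta}x) + i\,\mbox{Im}(e^{i\theta}x)$, and the two bounds meet.

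For the harder direction $(ii) \Rightarrow (i)$, the lower bound $v(x) \geq \frac{1}{2}\|x\|$ is part of (\ref{I.1.1}). To produce the matching upper bound, I would fix $\theta \in \mathbb{R}$ and set $A := \mbox{Re}(e^{i\theta}x)$ and $B := \mbox{Im}(e^{i\theta}x)$ (both self-adjoint). Condition (ii) rewrites as $\|A + iB\| = \|A\| + \|B\|$, i.e., the norm--parallelism $A \parallel B$ with parameter $\lambda = i$. Lemma \ref{L.2.7} then supplies a state $\varphi_\theta$ with $|\varphi_\theta(AB)| = \|A\|\|B\|$; chaining this with the Cauchy--Schwarz inequality $|\varphi_\theta(AB)|^2 \leq \varphi_\theta(A^2)\varphi_\theta(B^2)$ together with the bounds $\varphi_\theta(A^2) \leq \|A\|^2$ and $\varphi_\theta(B^2) \leq \|B\|^2$ forces equality throughout, yielding $\varphi_\theta(A^2) = \|A\|^2$ and $\varphi_\theta(B^2) = \|B\|^2$ individually.

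The pivotal fact is the $\theta$-independent identity $A^2 + B^2 = \frac{1}{2}(|x|^2 + |x^*|^2)$, a routine computation from the definitions of $A, B$ together with $(e^{i\theta}x)^*(e^{i\theta}x) = |x|^2$ and $(e^{i\theta}x)(e^{i\theta}x)^* = |x^*|^2$. Summing the two equalities above gives $\|A\|^2 + \|B\|^2 = \varphi_\theta\!\big(\tfrac{1}{2}(|x|^2+|x^*|^2)\big) \leq \tfrac{1}{2}\||x|^2+|x^*|^2\|$, while the triangle inequality combined with self-adjointness yields the matching lower bound $\|A\|^2 + \|B\|^2 = \|A^2\| + \|B^2\| \geq \|A^2 + B^2\| = \tfrac{1}{2}\||x|^2+|x^*|^2\|$. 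Hence $\|A\|^2 + \|B\|^2$ equals a constant $C$ independent of $\theta$. Combined with $\|A\| + \|B\| = \|x\|$ from (ii), the pair $(\|A\|, \|B\|)$ lies in the root set of the fixed quadratic $t^2 - \|x\|t + \frac{\|x\|^2 - C}{2} = 0$, a set of size at most two; continuity of $g(\theta) := \|\mbox{Re}(e^{i\theta}x)\|$ in $\theta$ forces $g$ to be constant, and the shift relation $\|\mbox{Im}(e^{i\theta}x)\| = \|\mbox{Re}(e^{i(\theta - \pi/2)}x)\|$ (as used in the proof of Theorem \ref{T.2.2}(ii)) forces $\|B\|$ to be constant as well. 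The sum constraint then pins both to $\|x\|/2$, and Theorem \ref{T.2.2}(i) yields $v(x) = \sup_\theta g(\theta) = \|x\|/2$. The main obstacle is the Cauchy--Schwarz equality analysis: one must see that equality in the composite chain pins down each of $\varphi_\theta(A^2)$ and $\varphi_\theta(B^2)$ individually, and this is precisely what makes the $\theta$-independent sum usable in the decisive step.
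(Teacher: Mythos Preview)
Your argument is correct and shares the paper's overall architecture: the implication (i)$\Rightarrow$(ii) is identical, and for (ii)$\Rightarrow$(i) both proofs recognise $\mbox{Re}(e^{i\theta}x)\parallel\mbox{Im}(e^{i\theta}x)$, invoke Lemma~\ref{L.2.7} to obtain a state, identify a $\theta$-independent quantity that confines $\{\|A\|,\|B\|\}$ to the root set of a fixed quadratic, and finish via continuity together with the phase shift $\|\mbox{Im}(e^{i\theta}x)\|=\|\mbox{Re}(e^{i(\theta\pm\pi/2)}x)\|$. The genuine difference is the choice of invariant. The paper computes $\mbox{Im}(AB)=\tfrac{1}{4}(xx^{*}-x^{*}x)$ and argues that the \emph{product} $\|A\|\,\|B\|$ equals $\big\|\mbox{Im}(AB)\big\|$ and is therefore constant; you instead use $A^{2}+B^{2}=\tfrac{1}{2}\big(|x|^{2}+|x^{*}|^{2}\big)$ and show that the \emph{sum of squares} $\|A\|^{2}+\|B\|^{2}$ is constant, extracting $\varphi_{\theta}(A^{2})=\|A\|^{2}$ and $\varphi_{\theta}(B^{2})=\|B\|^{2}$ individually from the Cauchy--Schwarz equality chain and then sandwiching $\|A\|^{2}+\|B\|^{2}$ between $\varphi_{\theta}(A^{2}+B^{2})$ and $\|A^{2}+B^{2}\|$. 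Your route has the virtue that every step is elementary; the paper's corresponding passage from $v(AB)=\|A\|\,\|B\|$ to $\big\|\mbox{Im}(AB)\big\|=\|A\|\,\|B\|$ via Theorem~\ref{T.2.2}(ii) is less transparent, since that theorem only yields the supremum over all phases rather than the value at the trivial phase. One small caveat: your Cauchy--Schwarz equality step needs $\|A\|,\|B\|>0$ to isolate $\varphi_{\theta}(A^{2})$ and $\varphi_{\theta}(B^{2})$ separately. This is automatic, since if $A(\theta_{0})=0$ for some $\theta_{0}$ then $x$ is a unimodular multiple of a self-adjoint element, and a direct check of (ii) (which would force $|\cos\psi|+|\sin\psi|=1$ for all $\psi$) shows $x=0$; it is worth recording this in the write-up.
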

\begin{proof}
(i)$\Rightarrow$(ii) Suppose that $v(x) = \frac{1}{2}\|x\|$.
Then for any $\theta \in \mathbb{R}$, we have
\begin{align*}
\|x\| = \|e^{i\theta}x\| &= \|\mbox{Re}(e^{i\theta}x) + i\mbox{Im}(e^{i\theta}x)\|
\\&\leq \|\mbox{Re}(e^{i\theta}x)\| + \|\mbox{Im}(e^{i\theta}x)\|
\\&\leq 2\max\big\{\|\mbox{Re}(e^{i\theta}x)\|, \|\mbox{Im}(e^{i\theta}x)\|\big\}
\\& \leq 2v(e^{i\theta}x)
\qquad\big(\mbox{by Theorem \ref{T.2.6} (ii)}\big)
\\& = 2v(x) = \|x\|,
\end{align*}
and hence $\|x\| = \|\mbox{Re}(e^{i\theta}x)\| + \|\mbox{Im}(e^{i\theta}x)\|$.

(ii)$\Rightarrow$(i) Suppose (ii) holds.
Thus for all $\theta \in \mathbb{R}$,
\begin{align*}
\|\mbox{Re}(e^{i\theta}x) + i\mbox{Im}(e^{i\theta}x)\| = \|\mbox{Re}(e^{i\theta}x)\| + \|\mbox{Im}(e^{i\theta}x)\|,
\end{align*}
so $\mbox{Re}(e^{i\theta}x) \parallel \mbox{Im}(e^{i\theta}x)$.
By Lemma \ref{L.2.7}, there exists a state $\varphi$ over $\mathfrak{A}$ such that
\begin{align*}
\Big|\varphi\Big(\big(\mbox{Re}(e^{i\theta}x)\big)^*\mbox{Im}(e^{i\theta}x)\Big)\Big|
= \|\mbox{Re}(e^{i\theta}x)\|\,\|\mbox{Im}(e^{i\theta}x)\|,
\end{align*}
and hence
\begin{align*}
\big|\varphi\big(\mbox{Re}(e^{i\theta}x)\mbox{Im}(e^{i\theta}x)\big)\big|
= \|\mbox{Re}(e^{i\theta}x)\|\,\|\mbox{Im}(e^{i\theta}x)\|.
\end{align*}
From this it follows that $v\big(\mbox{Re}(e^{i\theta}x)\mbox{Im}(e^{i\theta}x)\big)
= \|\mbox{Re}(e^{i\theta}x)\|\,\|\mbox{Im}(e^{i\theta}x)\|$,
so by Theorem \ref{T.2.2} (ii) we reach that
\begin{align}\label{T.2.8.1}
\|\mbox{Re}(e^{i\theta}x)\|\,\|\mbox{Im}(e^{i\theta}x)\|
= \big\|\mbox{Im}\big((\mbox{Re}(e^{i\theta}x)\mbox{Im}(e^{i\theta}x)\big)\big\|.
\end{align}
On the other hand,
\begin{align*}
\mbox{Im}\big(\mbox{Re}(e^{i\theta}x)\mbox{Im}(e^{i\theta}x)\big)
&= \mbox{Im}\Big((\frac{e^{i\theta}x + e^{-i\theta}x^*}{2})(\frac{e^{i\theta}x - e^{-i\theta}x^*}{2i})\Big)
\\& = \mbox{Im}\Big(\frac{e^{2i\theta}x^2 - e^{-2i\theta}{x^*}^2 -xx^* + x^*x}{4i}\Big)
\\& = \frac{1}{2i}\Big\{\frac{e^{2i\theta}x^2 - e^{-2i\theta}{x^*}^2 -xx^* + x^*x}{4i}
\\& \qquad \qquad \qquad - \frac{e^{-2i\theta}{x^*}^2 - e^{2i\theta}x^2 -xx^* + x^*x}{-4i}\Big\}
\\& = \frac{xx^* - x^*x}{4}
= \mbox{Im}\big(\mbox{Re}(x)\mbox{Im}(x)\big),
\end{align*}
and by (\ref{T.2.8.1}) we get
\begin{align}\label{T.2.8.2}
\|\mbox{Re}(e^{i\theta}x)\|\,\|\mbox{Im}(e^{i\theta}x)\|
= \big\|\mbox{Im}\big(\mbox{Re}(x)\mbox{Im}(x)\big)\big\|.
\end{align}

Thus for all $\theta \in \mathbb{R}$, by (ii) and (\ref{T.2.8.2}) we obtain
\begin{align}\label{T.2.8.3}
\|\mbox{Re}(e^{i\theta}x)\| = \frac{\|x\| + \sqrt{\|x\|^2
- 4\big\|\mbox{Im}\big(\mbox{Re}(x)\mbox{Im}(x)\big)\big\|}}{2}
\end{align}
and
\begin{align}\label{T.2.8.4}
\|\mbox{Im}(e^{i\theta}x)\| = \frac{\|x\| - \sqrt{\|x\|^2
- 4\big\|\mbox{Im}\big(\mbox{Re}(x)\mbox{Im}(x)\big)\big\|}}{2}.
\end{align}
Since
\begin{align*}
\mbox{Re}(e^{i\theta}x)
= \frac{(\cos \theta + i\sin \theta)x + (\cos \theta - i\sin \theta)x^*}{2}
= \cos \theta \mbox{Re}(x) - \sin \theta\mbox{Im}(x)
\end{align*}
and
\begin{align*}
\mbox{Im}(e^{i\theta}x)
= \frac{(\cos \theta + i\sin \theta)x - (\cos \theta - i\sin \theta)x^*}{2i}
= \sin \theta\mbox{Re}(x) + \cos \theta\mbox{Im}(x)
\end{align*}
So, from relations (\ref{T.2.8.3}) and (\ref{T.2.8.4}), we conclude that
the functions $\|\mbox{Re}(e^{i\theta}x)\|, \|\mbox{Im}(e^{i\theta}x)\|$
are continuous on $\theta \in \mathbb{R}$ and therefore they must be constant, i.e.,
\begin{align*}
\|\mbox{Re}(e^{i\theta}x)\| = \|\mbox{Im}(e^{i\theta}x)\| = \frac{1}{2}\|x\| \qquad (\theta \in \mathbb{R}).
\end{align*}
Thus $\displaystyle{\sup_{\theta \in \mathbb{R}}}\|\mbox{Re}(e^{i\theta}x)\| = \frac{1}{2}\|x\|$.
Now, by Theorem \ref{T.2.2} (i) we conclude that $v(x) = \frac{1}{2}\|x\|$.
\end{proof}
Another, we present new improvement of the inequalities (\ref{I.1.1}).
\begin{theorem}\label{T.2.9}
Let $\mathfrak{A}$ be a $C^*$-algebra. For $x\in \mathfrak{A}$ the following statements hold.
\begin{itemize}
\item[(i)] $\frac{1}{2}\|x\| \leq \frac{1}{2}\sqrt{\|x^*x + xx^*\|}\leq v(x)$.
\item[(ii)] $v(x) \leq \frac{1}{\sqrt{2}}\sqrt{\|x^*x + xx^*\|} \leq \|x\|$.
\end{itemize}
\end{theorem}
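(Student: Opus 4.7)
The plan is to deduce both statements from Theorem \ref{T.2.3}(i) together with one elementary Cauchy--Schwarz argument for states, observing throughout that $|x|^2 + |x^*|^2 = x^*x + xx^*$.

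For (i), I would handle the left inequality first: since $xx^*$ is positive, $x^*x \leq x^*x + xx^*$ in the order of positive elements, whence $\|x\|^2 = \|x^*x\| \leq \|x^*x + xx^*\|$, and the claim follows by taking square roots and dividing by $2$. For the right inequality I would simply invoke Theorem \ref{T.2.3}(i) and discard the nonnegative term $2c(x^2)$ from under the square root, replacing $\||x|^2 + |x^*|^2\|$ by $\|x^*x + xx^*\|$.

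The only inequality that demands genuine work is the left inequality of (ii). The idea is to apply the Cauchy--Schwarz inequality for positive linear functionals twice. For any $\varphi \in \mathcal{S}(\mathfrak{A})$ one has $|\varphi(x)|^2 \leq \varphi(x^*x)$, and since $\varphi$ is self-adjoint ($\varphi(x^*) = \overline{\varphi(x)}$) the same inequality applied to $x^*$ gives $|\varphi(x)|^2 = |\varphi(x^*)|^2 \leq \varphi(xx^*)$. Summing these and bounding by the norm yields
\[
2|\varphi(x)|^2 \leq \varphi(x^*x + xx^*) \leq \|x^*x + xx^*\|,
\]
and taking the supremum over $\varphi \in \mathcal{S}(\mathfrak{A})$ produces $2v(x)^2 \leq \|x^*x + xx^*\|$, which is the desired bound.

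Finally, the right inequality of (ii) follows from the triangle inequality $\|x^*x + xx^*\| \leq \|x^*x\| + \|xx^*\| = 2\|x\|^2$, and then dividing by $\sqrt{2}$. I do not anticipate any real obstacle: the only nonformal step is the symmetrized state--Cauchy--Schwarz in (ii), and the rest is bookkeeping against Theorem \ref{T.2.3}(i).
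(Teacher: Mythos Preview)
Your proof is correct, and in both nontrivial steps it differs from the paper's argument.

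For the right inequality in (i), the paper does not go through Theorem~\ref{T.2.3}(i). Instead it uses the Cartesian identity $x^*x + xx^* = 2\,\mbox{Re}^2(x) + 2\,\mbox{Im}^2(x)$, bounds the norm of this sum by $2\|\mbox{Re}(x)\|^2 + 2\|\mbox{Im}(x)\|^2$, and then invokes Theorem~\ref{T.2.6}(ii) to replace each summand by $v(x)^2$. Your route---simply dropping the nonnegative Crawford term $2c(x^2)$ from Theorem~\ref{T.2.3}(i)---is shorter and avoids the Cartesian identity entirely, though it makes Theorem~\ref{T.2.9}(i) a visible corollary of Theorem~\ref{T.2.3}(i) rather than an independent estimate. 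For the left inequality in (ii), the contrast is sharper: the paper passes to a faithful representation $\pi:\mathfrak{A}\to\mathbb{B}(\mathscr{H})$, computes $\|\pi(\alpha\,\mbox{Re}(x)+\beta\,\mbox{Im}(x))\xi\|$ via the norm of a $2\times 2$ block operator, and finishes with Theorem~\ref{T.2.6}(i). Your symmetrized Cauchy--Schwarz argument for states ($|\varphi(x)|^2\le\varphi(x^*x)$ and $|\varphi(x)|^2=|\varphi(x^*)|^2\le\varphi(xx^*)$, then add) is considerably more elementary: it stays entirely inside the state-space description of $v(\cdot)$ and never needs a representation, Theorem~\ref{T.2.6}, or any matrix manipulation. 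The paper's approach, on the other hand, illustrates the utility of Theorem~\ref{T.2.6}(i) and the operator-matrix technique, which may be the author's intent.
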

\begin{proof}
(i) Let $x\in \mathfrak{A}$. Clearly, $\frac{1}{2}\|x\| \leq \frac{1}{2}\sqrt{\|x^*x + xx^*\|}$.
But, by simple computations,
\begin{align*}
x^*x + xx^* = 2\mbox{Re}^2(x) + 2\mbox{Im}^2(x).
\end{align*}
Consequently, by Theorem \ref{T.2.6} (ii) we get
\begin{align*}
\frac{1}{2}\sqrt{\|x^*x + xx^*\|} &= \frac{1}{2}\sqrt{\big\|2\mbox{Re}^2(x) + 2\mbox{Im}^2(x)\big\|}
\\& \leq \frac{1}{2}\sqrt{2\|\mbox{Re}(x)\|^2 + 2\|\mbox{Im}(x)\|^2}
\\& \leq \frac{1}{2}\sqrt{2v^2(x) + 2v^2(x)} = v(x)
\end{align*}
Therefore $\frac{1}{2}\sqrt{\|x^*x + xx^*\|}\leq v(x)$.

(ii) Obviously, $\frac{1}{\sqrt{2}}\sqrt{\|x^*x + xx^*\|} \leq \|x\|$.
Now, let $\pi: \mathfrak{A} \rightarrow \mathbb{B}(\mathscr{H})$ be a non-degenerate faithful representation
of $\mathfrak{A}$ on some Hilbert space $\mathscr{H}$ (see \cite[Theorem 2.6.1]{Dix}).
Let $\alpha, \beta \in \mathbb{R}$ satisfy $\alpha^2 + \beta^2 = 1$.
Then for any unit vector $\xi \in \mathscr{H}$, we have
\begin{align*}
\big\|\pi\big(\alpha \mbox{Re}(x) + \beta \mbox{Im}(x)\big)\xi\big\| &
=\left\|\begin{bmatrix}
\pi(\mbox{Re}(x)) & \pi(\mbox{Im}(x))\\
0 & 0
\end{bmatrix}
\begin{bmatrix}
\alpha \xi\\
\beta \xi
\end{bmatrix}
\right\|
\\&\leq \left\|\begin{bmatrix}
\pi(\mbox{Re}(x)) & \pi(\mbox{Im}(x))\\
0 & 0
\end{bmatrix}
\right\|
\\& = \left\|\begin{bmatrix}
\mbox{Re}(\pi(x)) & \mbox{Im}(\pi(x))\\
0 & 0
\end{bmatrix}
\right\|
\,\,\quad(\mbox{since $\pi$ is representation})
\\& = {\left\|\begin{bmatrix}
\mbox{Re}(\pi(x)) & \mbox{Im}(\pi(x))\\
0 & 0
\end{bmatrix}
\begin{bmatrix}
\mbox{Re}(\pi(x)) & 0\\
\mbox{Im}(\pi(x)) & 0
\end{bmatrix}
\right\|}^{\frac{1}{2}}
\\&= {\big\|\mbox{Re}^2(\pi(x)) + \mbox{Im}^2(\pi(x))\big\|}^{\frac{1}{2}}
\\&= \frac{1}{\sqrt{2}}{\big\|\pi(x)^*\pi(x) + \pi(x)\pi(x)^*\big\|}^{\frac{1}{2}}
\\&= \frac{1}{\sqrt{2}}{\big\|\pi(x^*x + xx^*)\big\|}^{\frac{1}{2}}
\\&= \frac{1}{\sqrt{2}}{\|x^*x + xx^*\|}^{\frac{1}{2}}.
\qquad(\mbox{since $\pi$ is isometric})
\end{align*}
Hence we have $\big\|\pi(\alpha \mbox{Re}(x) + \beta \mbox{Im}(x))\xi\big\| \leq \frac{1}{\sqrt{2}}\sqrt{\|x^*x + xx^*\|}$
and so by taking the supremum over all $\xi \in \mathscr{H}$ we obtain
$\|\pi(\alpha \mbox{Re}(x) + \beta \mbox{Im}(x))\big\| \leq \frac{1}{\sqrt{2}}\sqrt{\|x^*x + xx^*\|}$.
From this it follows that $\big\|\alpha \mbox{Re}(x) + \beta \mbox{Im}(x)\big\| \leq \frac{1}{\sqrt{2}}\sqrt{\|x^*x + xx^*\|}$
and hence
\begin{align*}
\displaystyle{\sup_{\alpha^2 + \beta^2 = 1}}\big\|\alpha \mbox{Re}(x) + \beta \mbox{Im}(x)\big\|
\leq \frac{1}{\sqrt{2}}\sqrt{\|x^*x + xx^*\|}.
\end{align*}
Now, by Theorem \ref{T.2.6} (i) we conclude that $v(x) \leq \frac{1}{\sqrt{2}}\sqrt{\|x^*x + xx^*\|}$.
\end{proof}
In what follows, $r(x)$ stands for the spectral radius of
an arbitrary element $x$ in a $C^*$-algebra $\mathfrak{A}$.
It is well known that for every $x\in \mathfrak{A}$, we
have $r(x) \leq \|x\|$ and that equality holds in this inequality if $x$ is normal.
In the following lemma we obtain a spectral radius inequality for sums of elements
in $C^*$-algebras.
\begin{lemma}\label{L.2.10}
Let $\mathfrak{A}$ be a $C^*$-algebra and let $z, w\in \mathfrak{A}$. Then
\begin{align*}
r(z + w) \leq \frac{1}{2} \Big(\|z\| + \|w\|
+ \sqrt{(\|z\| - \|w\|)^2 + 4 \min\{\|zw\|, \|wz\|\}}\Big).
\end{align*}
\end{lemma}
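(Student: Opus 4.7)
The plan is to control the growth of $\|(z+w)^n\|$ by a scalar recurrence whose dominant eigenvalue equals the claimed bound, then appeal to Gelfand's formula $r(z+w)=\lim_n\|(z+w)^n\|^{1/n}$. Set $a=\|z\|$, $b=\|w\|$, $c=\min\{\|zw\|,\|wz\|\}$, and assume without loss of generality that $c=\|zw\|$ (the other case is handled verbatim by grouping $wz$-pairs in place of $zw$-pairs).

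First I would expand
$$(z+w)^n=\sum_{\varepsilon\in\{z,w\}^n}\varepsilon_1\varepsilon_2\cdots\varepsilon_n$$
as a sum of $2^n$ words, and assign to each word its greedy left-to-right parsing: scanning positions $i=1,\ldots,n$, whenever $\varepsilon_i=z$ is immediately followed by $\varepsilon_{i+1}=w$, glue them into a single block of type $zw$ (of norm $\le c$) and advance by two; otherwise take the singleton $\varepsilon_i$ as a block of type $z$ or $w$ (of norm $\le a$ or $b$). Submultiplicativity of the $C^*$-norm then dominates each word's norm by the product of its block norms, so $\|(z+w)^n\|\le f_n$, where $f_n$ is the resulting scalar total.

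These greedy parsings are in bijection with sequences of blocks from $\{z,w,zw\}$ of total length $n$, subject to the single constraint that a $z$-block is never immediately followed by a $w$-block (otherwise they would have been merged). Splitting $f_n$ by its terminal block,
$$f_n^{(z)}=a f_{n-1},\quad f_n^{(w)}=b\bigl(f_{n-1}-f_{n-1}^{(z)}\bigr),\quad f_n^{(zw)}=c f_{n-2},$$
and eliminating the auxiliary sums yields the scalar second-order recurrence
$$f_n=(a+b)f_{n-1}+(c-ab)f_{n-2},\qquad f_0=1,\ f_1=a+b,$$
with characteristic roots $\lambda_\pm=\tfrac12\bigl(a+b\pm\sqrt{(a-b)^2+4c}\bigr)$, both nonnegative since $c\le ab$. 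Therefore $f_n=O(\lambda_+^n)$, and Gelfand's formula gives
$$r(z+w)=\lim_n\|(z+w)^n\|^{1/n}\le\limsup_n f_n^{1/n}\le\lambda_+,$$
which is precisely the stated inequality.

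The main obstacle is organizing the combinatorics so that the ``no $z$-then-$w$'' constraint reduces to a scalar recurrence. The subtle step is that $f_n^{(w)}$ depends on $f_{n-1}^{(z)}$ rather than on all of $f_{n-1}$; this correction converts the naive coefficient $c$ into $c-ab$ in the recurrence, which via the quadratic formula produces the correct $\sqrt{(a-b)^2+4c}$ in the bound (instead of the strictly weaker $\sqrt{(a+b)^2+4c}$ that the careless count would yield).
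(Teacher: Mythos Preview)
Your argument is correct and quite different from the paper's route. The paper simply quotes Kittaneh's inequality for operators on a Hilbert space,
\[
r(T+S)\le\tfrac12\Bigl(\|T\|+\|S\|+\sqrt{(\|T\|-\|S\|)^2+4\min\{\|TS\|,\|ST\|\}}\Bigr),
\]
and then invokes a faithful non-degenerate representation $\pi:\mathfrak{A}\to\mathbb{B}(\mathscr{H})$ to transport the bound back to $\mathfrak{A}$. Your proof, by contrast, is self-contained: you bound $\|(z+w)^n\|$ directly via a greedy $\{z,w,zw\}$-parsing of the $2^n$ words, derive the scalar recurrence $f_n=(a+b)f_{n-1}+(c-ab)f_{n-2}$, and read off $\lambda_+$ from the characteristic equation before applying Gelfand's formula. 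The bijection between words and block sequences with the ``no $z$-block followed by $w$-block'' constraint is exactly right, and the correction term $-ab$ coming from $f_{n-1}^{(z)}=af_{n-2}$ is the key observation that makes the discriminant collapse to $(a-b)^2+4c$.

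What each approach buys: the paper's proof is a two-line reduction but imports the Hilbert-space result as a black box and genuinely uses the $C^*$-structure (via Gelfand--Naimark). Your argument is longer but works verbatim in any Banach algebra, since it only uses submultiplicativity of the norm and Gelfand's spectral radius formula; it also explains structurally why the bound takes the form of the larger root of $\lambda^2-(a+b)\lambda+(ab-c)=0$. One cosmetic point: you might note explicitly that the degenerate case $\lambda_+=0$ forces $a=b=c=0$, hence $z=w=0$, so the appeal to $f_n=O(\lambda_+^n)$ is harmless.
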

\begin{proof}
We first recall that \cite[Corollary 1]{K.1} tells us that
\begin{align}\label{L.2.10.1}
r(T + S) \leq \frac{1}{2} \Big(\|T\| + \|S\|
+ \sqrt{(\|T\| - \|S\|)^2 + 4 \min\{\|TS\|, \|ST\|\}}\Big),
\end{align}
for all bounded linear operators $T, S$ that acting on a Hilbert space.
Now, let $\pi: \mathfrak{A} \rightarrow \mathbb{B}(\mathscr{H})$ be a non-degenerate faithful representation
of $\mathfrak{A}$ on some Hilbert space $\mathscr{H}$ (see \cite[Theorem 2.6.1]{Dix}).
Since $\pi$ is isometric, by letting $T = \pi(z)$ and $S = \pi(w)$ in (\ref{L.2.10.1}), we obtain
\begin{align*}
r(z + w) &= r\big(\pi(z) + \pi(w\big))
\\ &\leq \frac{1}{2} \Big(\|\pi(z)\| + \|\pi(w)\|
\\& \quad \qquad + \sqrt{(\|\pi(z)\| - \|\pi(w)\|)^2 + 4 \min\{\|\pi(z)\pi(w)\|, \|\pi(w)\pi(z)\|\}}\Big)
\\& = \frac{1}{2} \Big(\|z\| + \|w\|
+ \sqrt{(\|z\| - \|w\|)^2 + 4 \min\{\|zw\|, \|wz\|\}}\Big),
\end{align*}
and the statement is proved.
\end{proof}
Now, we present a refinement of the triangle inequality
for the numerical radius in $C^*$-algebras.
\begin{theorem}\label{T.2.11}
Let $\mathfrak{A}$ be a $C^*$-algebra.
For $x, y\in \mathfrak{A}$ the following statements hold.
\begin{itemize}
\item[(i)] \begin{align*}
v(x + y) &\leq \frac{1}{2} \big(v(x) + v(y)\big)
\\& \qquad + \frac{1}{2}\sqrt{\big(v(x) - v(y)\big)^2 +
4 \sup_{\theta \in \mathbb{R}}\big\|\mbox{Re}(e^{i\theta}x)\mbox{Re}(e^{i\theta} y)\big\|}
\\& \leq v(x) + v(y). \end{align*}
\item[(ii)] \begin{align*}
v(x + y) &\leq \frac{1}{2} \big(v(x) + v(y)\big)
\\& \qquad + \frac{1}{2}\sqrt{\big(v(x) - v(y)\big)^2 +
4 \sup_{\theta \in \mathbb{R}}\big\|\mbox{Im}(e^{i\theta}x)\mbox{Im}(e^{i\theta} y)\big\|}
\\& \leq v(x) + v(y). \end{align*}
\end{itemize}
\end{theorem}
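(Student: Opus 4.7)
The plan is to reduce to Lemma~\ref{L.2.10} one slice at a time by using the variational characterization of the numerical radius from Theorem~\ref{T.2.2}. For part (i), I would begin from
\begin{align*}
v(x+y) = \sup_{\theta \in \mathbb{R}}\|\mathrm{Re}(e^{i\theta}(x+y))\|
= \sup_{\theta \in \mathbb{R}}\|\mathrm{Re}(e^{i\theta}x) + \mathrm{Re}(e^{i\theta}y)\|.
\end{align*}
For each fixed $\theta$, both summands are self-adjoint, so their sum is self-adjoint and its norm coincides with its spectral radius. Applying Lemma~\ref{L.2.10} with $z = \mathrm{Re}(e^{i\theta}x)$ and $w = \mathrm{Re}(e^{i\theta}y)$ (using also that $r(z+w)=\|z+w\|$) yields
\begin{align*}
\|\mathrm{Re}(e^{i\theta}(x+y))\| \leq \tfrac{1}{2}\Big(\|\mathrm{Re}(e^{i\theta}x)\| + \|\mathrm{Re}(e^{i\theta}y)\| + \sqrt{D(\theta)}\,\Big),
\end{align*}
where $D(\theta) = (\|\mathrm{Re}(e^{i\theta}x)\| - \|\mathrm{Re}(e^{i\theta}y)\|)^2 + 4\|\mathrm{Re}(e^{i\theta}x)\mathrm{Re}(e^{i\theta}y)\|$.

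The crucial intermediate step is the observation that, for nonnegative reals, the function $f(a,b,c) = \tfrac{1}{2}\big(a+b+\sqrt{(a-b)^2+4c}\big)$ is nondecreasing in each variable. This is routine: $\partial_a f = \tfrac{1}{2}\big(1 + (a-b)/\sqrt{(a-b)^2+4c}\big) \geq 0$ because $\sqrt{(a-b)^2+4c}\geq |a-b|$, and similarly for $b$ and $c$. Using monotonicity together with $\|\mathrm{Re}(e^{i\theta}x)\|\leq v(x)$, $\|\mathrm{Re}(e^{i\theta}y)\|\leq v(y)$ (from Theorem~\ref{T.2.2}(i)), and the trivial bound $\|\mathrm{Re}(e^{i\theta}x)\mathrm{Re}(e^{i\theta}y)\| \leq \sup_{\theta\in\mathbb{R}}\|\mathrm{Re}(e^{i\theta}x)\mathrm{Re}(e^{i\theta}y)\|$, we may pass the supremum through to obtain the first of the two inequalities in (i).

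For the second inequality of (i), it suffices to check that $\sqrt{(v(x)-v(y))^2 + 4K} \leq v(x)+v(y)$ where $K := \sup_{\theta}\|\mathrm{Re}(e^{i\theta}x)\mathrm{Re}(e^{i\theta}y)\|$. Squaring, this reduces to $K \leq v(x)v(y)$, which follows from submultiplicativity $\|\mathrm{Re}(e^{i\theta}x)\mathrm{Re}(e^{i\theta}y)\| \leq \|\mathrm{Re}(e^{i\theta}x)\|\,\|\mathrm{Re}(e^{i\theta}y)\|$ combined once more with Theorem~\ref{T.2.2}(i). Part (ii) is proved by the identical argument but invoking Theorem~\ref{T.2.2}(ii) in place of (i), i.e., writing $v(x+y) = \sup_{\theta}\|\mathrm{Im}(e^{i\theta}x)+\mathrm{Im}(e^{i\theta}y)\|$ and running the self-adjoint spectral-radius reduction to Lemma~\ref{L.2.10} verbatim.

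I expect the main subtlety to be the bookkeeping around the supremum: one must apply Lemma~\ref{L.2.10} \emph{pointwise} in $\theta$ and then invoke monotonicity of $f$ before taking $\sup_\theta$ of both sides, since the three quantities in the bound depend on $\theta$ in different ways (and only the cross term needs to remain as a supremum in the final statement). Once the monotonicity of $f$ is in place, everything else is a matter of substitution.
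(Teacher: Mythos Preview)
Your proposal is correct and follows essentially the same route as the paper: reduce $v(x+y)$ to $\sup_\theta\|\mathrm{Re}(e^{i\theta}x)+\mathrm{Re}(e^{i\theta}y)\|$ via Theorem~\ref{T.2.2}, apply Lemma~\ref{L.2.10} pointwise in $\theta$ to the self-adjoint summands, and then pass to the supremum using monotonicity. The only cosmetic difference is in how that monotonicity is packaged: the paper recognizes $\tfrac12\big(a+b+\sqrt{(a-b)^2+4c}\big)$ as the operator norm of the symmetric matrix $\begin{bmatrix} a & \sqrt{c}\\ \sqrt{c} & b\end{bmatrix}$ and invokes entrywise norm monotonicity for nonnegative matrices, whereas you verify the monotonicity of $f(a,b,c)$ directly by differentiation; and for part~(ii) the paper substitutes $x\mapsto ix$, $y\mapsto iy$ rather than rerunning the argument with Theorem~\ref{T.2.2}(ii).
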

\begin{proof}
(i) Since $\mbox{Re}(e^{i\theta}(x + y))$ is self adjoint for any $\theta \in \mathbb{R}$,
we have
\begin{align*}
\|\mbox{Re}(e^{i\theta}(x + y))\| = r\big(\mbox{Re}(e^{i\theta}(x + y))\big).
\end{align*}
So, by letting $z = \mbox{Re}(e^{i\theta}x)$
and $w = \mbox{Re}(e^{i\theta}y)$ in Lemma \ref{L.2.10}, we obtain
\begin{align*}
\|\mbox{Re}(e^{i\theta}(x + y))\|& = r\big(\mbox{Re}(e^{i\theta}(x + y))\big)
\\&= r\big(\mbox{Re}(e^{i\theta}x) + \mbox{Re}(e^{i\theta}y)\big)
\\& \leq \frac{1}{2} \Big(\|\mbox{Re}(e^{i\theta}x)\| + \|\mbox{Re}(e^{i\theta}y)\|
\\& \qquad + \sqrt{(\|\mbox{Re}(e^{i\theta}x)\| - \|\mbox{Re}(e^{i\theta}y)\|)^2
+ 4 \|\mbox{Re}(e^{i\theta}x)\mbox{Re}(e^{i\theta}y)\|}\Big)
\\& = \left\|\begin{bmatrix}
\|\mbox{Re}(e^{i\theta}x)\| & \sqrt{\|\mbox{Re}(e^{i\theta}x)\mbox{Re}(e^{i\theta}y)\|}
\\ \sqrt{\|\mbox{Re}(e^{i\theta}x)\mbox{Re}(e^{i\theta}y)\|} & \|\mbox{Re}(e^{i\theta}y)\|
\end{bmatrix}\right\|
\\& \leq \left\|\begin{bmatrix}
\displaystyle{\sup_{\theta \in \mathbb{R}}}\|\mbox{Re}(e^{i\theta}x)\| &
\displaystyle{\sup_{\theta \in \mathbb{R}}}\sqrt{\|\mbox{Re}(e^{i\theta}x)\mbox{Re}(e^{i\theta}y)\|}\\
\displaystyle{\sup_{\theta \in \mathbb{R}}}\sqrt{\|\mbox{Re}(e^{i\theta}x)\mbox{Re}(e^{i\theta}y)\|} &
\displaystyle{\sup_{\theta \in \mathbb{R}}}\|\mbox{Re}(e^{i\theta}y)\|
\end{bmatrix}\right\|
\\&(\mbox{by the norm monotonicity of matrices with nonnegative entries})
\\& = \left\|\begin{bmatrix}
v(x) & \displaystyle{\sup_{\theta \in \mathbb{R}}}\sqrt{\|\mbox{Re}(e^{i\theta}x)\mbox{Re}(e^{i\theta}y)\|}\\
\displaystyle{\sup_{\theta \in \mathbb{R}}}\sqrt{\|\mbox{Re}(e^{i\theta}x)\mbox{Re}(e^{i\theta}y)\|} & v(y)
\end{bmatrix}\right\|
\\& \qquad \qquad \qquad \qquad \qquad \qquad \qquad(\mbox{by Theorem \ref{T.2.2} (i)})
\\ & = \frac{1}{2} \big(v(x) + v(y)\big)
\\& \qquad \qquad  + \frac{1}{2} \sqrt{(v(x) - v(y))^2
+ 4 \displaystyle{\sup_{\theta \in \mathbb{R}}}\|\mbox{Re}(e^{i\theta}x)\mbox{Re}(e^{i\theta}y)\|}
\end{align*}
Therefore, for every $\theta \in \mathbb{R}$ we have
\begin{align*}
\|\mbox{Re}(e^{i\theta}(x + y))\| &\leq \frac{1}{2} \big(v(x) + v(y)\big)
\\& \qquad \qquad  + \frac{1}{2} \sqrt{(v(x) - v(y))^2
+ 4 \displaystyle{\sup_{\theta \in \mathbb{R}}}\|\mbox{Re}(e^{i\theta}x)\mbox{Re}(e^{i\theta}y)\|},
\end{align*}
and hence
\begin{align*}
\displaystyle{\sup_{\theta \in \mathbb{R}}}\|\mbox{Re}(e^{i\theta}(x + y))\| &\leq \frac{1}{2} \big(v(x) + v(y)\big)
\\& \qquad \qquad  + \frac{1}{2} \sqrt{(v(x) - v(y))^2
+ 4 \displaystyle{\sup_{\theta \in \mathbb{R}}}\|\mbox{Re}(e^{i\theta}x)\mbox{Re}(e^{i\theta}y)\|}.
\end{align*}
Now, by Lemma \ref{T.2.2} (i) and the above inequality we get
\begin{align}\label{I.2.11.1}
v(x + y) \leq \frac{1}{2} \Big(v(x) + v(y)
+ \sqrt{(v(x) - v(y))^2
+ 4 \displaystyle{\sup_{\theta \in \mathbb{R}}}\|\mbox{Re}(e^{i\theta}x)\mbox{Re}(e^{i\theta}y)\|}\Big).
\end{align}
Furthermore, by Lemma \ref{T.2.2} (i) we have
\begin{align*}
\displaystyle{\sup_{\theta \in \mathbb{R}}}\|\mbox{Re}(e^{i\theta}x)\mbox{Re}(e^{i\theta}y)\|
\leq \displaystyle{\sup_{\theta \in \mathbb{R}}}\|\mbox{Re}(e^{i\theta}x)\|
\displaystyle{\sup_{\theta \in \mathbb{R}}}\|\mbox{Re}(e^{i\theta}y)\| = v(x)v(y).
\end{align*}
Thus the inequalities (i) follow from (\ref{I.2.11.1}) and the above inequality.

(ii) It is enough to replace $x$ and $y$ in (i) by $ix$ and $iy$, respectively.
\end{proof}
\begin{corollary}\label{C.2.12}
Let $\mathfrak{A}$ be a $C^*$-algebra and let $x, y\in \mathfrak{A}$.
If $x\,{\parallel}_v \,y$ then there exists ${\theta}_0 \in \mathbb{R}$ such that
the following statements hold.
\begin{itemize}
\item[(i)] $\displaystyle{\sup_{\theta \in \mathbb{R}}}\big\|\mbox{Re}(e^{i\theta}x)\mbox{Re}(e^{i(\theta + {\theta}_0)} y)\big\|
= \displaystyle{\sup_{\theta \in \mathbb{R}}}\|\mbox{Re}(e^{i\theta}x)\|\|\mbox{Re}(e^{i\theta}y)\|$.
\item[(ii)] $\displaystyle{\sup_{\theta \in \mathbb{R}}}\big\|\mbox{Im}(e^{i\theta}x)\mbox{Im}(e^{i(\theta + {\theta}_0)} y)\big\|
= \displaystyle{\sup_{\theta \in \mathbb{R}}}\|\mbox{Im}(e^{i\theta}x)\|\|\mbox{Im}(e^{i\theta}y)\|$.
\end{itemize}
\end{corollary}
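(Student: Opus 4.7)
The plan is to apply the refined triangle inequality Theorem \ref{T.2.11} to $x$ and $e^{i\theta_0}y$, where $\lambda=e^{i\theta_0}\in\mathbb{T}$ is the complex unit witnessing $x\parallel_v y$, and then force the chain of inequalities in Theorem \ref{T.2.11} to collapse to equalities.

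First I would unpack the hypothesis: by Definition \ref{D.1.2} (with the obvious reading that the definition means $v(x+\lambda y)=v(x)+v(y)$) there exists $\theta_0\in\mathbb{R}$ with $\lambda=e^{i\theta_0}$ such that
\begin{align*}
v\bigl(x+e^{i\theta_0}y\bigr)=v(x)+v(y).
\end{align*}
A preliminary observation I will record is that $v(e^{i\theta_0}y)=v(y)$, which is immediate from the linearity of states together with $|e^{i\theta_0}\varphi(y)|=|\varphi(y)|$.

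Next I would apply Theorem \ref{T.2.11}(i) with $y$ replaced by $e^{i\theta_0}y$. Writing
\begin{align*}
M:=\sup_{\theta\in\mathbb{R}}\bigl\|\mbox{Re}(e^{i\theta}x)\,\mbox{Re}(e^{i(\theta+\theta_0)}y)\bigr\|,
\end{align*}
the theorem yields
\begin{align*}
v(x)+v(y)=v\bigl(x+e^{i\theta_0}y\bigr)\leq\tfrac{1}{2}\bigl(v(x)+v(y)\bigr)+\tfrac{1}{2}\sqrt{\bigl(v(x)-v(y)\bigr)^{2}+4M}\leq v(x)+v(y).
\end{align*}
Hence both inequalities are equalities, and squaring the resulting identity $\sqrt{(v(x)-v(y))^{2}+4M}=v(x)+v(y)$ gives $4M=(v(x)+v(y))^{2}-(v(x)-v(y))^{2}=4v(x)v(y)$, so $M=v(x)v(y)$. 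Combined with Theorem \ref{T.2.2}(i), which identifies $v(x)=\sup_{\theta}\|\mbox{Re}(e^{i\theta}x)\|$ and similarly for $y$, this is exactly the equality asserted in (i).

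For (ii) the argument is identical with Theorem \ref{T.2.11}(ii) used in place of Theorem \ref{T.2.11}(i) and Theorem \ref{T.2.2}(ii) in place of Theorem \ref{T.2.2}(i); alternatively one may simply replace $x,y$ by $ix,iy$ in (i), which sends $\mbox{Re}(e^{i\theta}\cdot)$ to $\mbox{Im}(e^{i\theta}\cdot)$ and preserves the witnessing $\theta_0$. I do not foresee a genuine obstacle: the whole content of the corollary is that the refined triangle inequality in Theorem \ref{T.2.11} is sharp precisely when the numerical-radius parallelism holds, and this sharpness analysis is a one-line algebraic consequence of the quadratic identity above. The only point deserving care is bookkeeping of the rotation $e^{i\theta_0}$ inside the supremum so that the factor $\mbox{Re}(e^{i\theta}e^{i\theta_0}y)$ is correctly written as $\mbox{Re}(e^{i(\theta+\theta_0)}y)$, which is what gives the shift by $\theta_0$ appearing in the statement.
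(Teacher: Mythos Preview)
Your proposal is correct and follows essentially the same route as the paper: unpack $x\parallel_v y$ to obtain $\theta_0$, apply Theorem~\ref{T.2.11} to $x$ and $e^{i\theta_0}y$ so that the chain of inequalities collapses, deduce that the supremum on the left equals $v(x)v(y)$, and then invoke Theorem~\ref{T.2.2} to rewrite $v(x)v(y)$ in the form appearing on the right. The only difference is that you spell out the quadratic-identity step $(v(x)+v(y))^2-(v(x)-v(y))^2=4v(x)v(y)$ explicitly, whereas the paper leaves that implicit.
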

\begin{proof}
Since $x\,{\parallel}_v \,y$, so there exists ${\theta}_0 \in \mathbb{R}$
such that
\begin{align*}
v(x + e^{i{\theta}_0}y) = v(x) + v(y) = v(x) + v(e^{i{\theta}_0}y).
\end{align*}
Hence by Theorem \ref{T.2.11} it follows that
\begin{align*}
\sup_{\theta \in \mathbb{R}}\big\|\mbox{Re}(e^{i\theta}x)\mbox{Re}(e^{i(\theta + {\theta}_0)} y)\big\| = v(x)v(y)
\end{align*}
and
\begin{align*}
\sup_{\theta \in \mathbb{R}}\big\|\mbox{Im}(e^{i\theta}x)\mbox{Im}(e^{i(\theta + {\theta}_0)} y)\big\| = v(x)v(y).
\end{align*}
These, together with Theorem \ref{T.2.2}, imply that (i) and (ii).
\end{proof}
In the following result we characterize the numerical radius parallelism for elements of a $C^*$-algebra.
\begin{theorem}\label{T.2.13}
Let $\mathfrak{A}$ be a $C^*$-algebra and let $x, y\in \mathfrak{A}$. Then the following statements are equivalent:
\begin{itemize}
\item[(i)] $x\,{\parallel}_v \,y$.
\item[(ii)] There exists a pure state $\varphi$ on $\mathfrak{A}$ such that $|\varphi(x)\varphi(y)| = v(x)v(y)$.
\end{itemize}
\end{theorem}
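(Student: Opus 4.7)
The plan is to prove the two implications separately, with the harder direction (i)$\Rightarrow$(ii) resting on the fact that the weak-$*$ compact convex state space $\mathcal{S}(\mathfrak{A})$ has the pure states as its extreme points, combined with Bauer's maximum principle applied to the convex function $\varphi \mapsto |\varphi(\cdot)|$.

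First I would dispatch (ii)$\Rightarrow$(i), the easier direction. Given a pure state $\varphi$ with $|\varphi(x)\varphi(y)| = v(x)v(y)$, I would first handle the degenerate cases in which $v(x)$ or $v(y)$ vanishes (then one of $x, y$ is zero and parallelism is automatic). Otherwise the hypothesis forces $|\varphi(x)| = v(x)$ and $|\varphi(y)| = v(y)$. Setting $\lambda = \varphi(x)\overline{\varphi(y)}/|\varphi(x)\varphi(y)| \in \mathbb{T}$ aligns the complex numbers $\varphi(x)$ and $\lambda\varphi(y)$, so $|\varphi(x+\lambda y)| = |\varphi(x)| + |\varphi(y)| = v(x)+v(y)$. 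Sandwiching with the triangle inequality $|\varphi(x+\lambda y)| \le v(x+\lambda y) \le v(x)+v(y)$ yields equality, hence $x\,{\parallel}_v\, y$.

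For (i)$\Rightarrow$(ii), suppose $v(x+\lambda y) = v(x)+v(y)$ for some $\lambda \in \mathbb{T}$. The key step is to produce a \emph{pure} state at which this supremum is attained. The map $\varphi \mapsto |\varphi(x+\lambda y)|$ is weak-$*$ continuous and convex on the weak-$*$ compact convex set $\mathcal{S}(\mathfrak{A})$, so by Bauer's maximum principle it attains its supremum at an extreme point, which (as recalled in the introduction) is precisely a pure state $\varphi_0 \in \mathcal{P}(\mathfrak{A})$. Thus $|\varphi_0(x+\lambda y)| = v(x)+v(y)$, and the sandwich
\[
v(x)+v(y) = |\varphi_0(x) + \lambda\varphi_0(y)| \le |\varphi_0(x)| + |\varphi_0(y)| \le v(x)+v(y)
\]
collapses to equality, forcing $|\varphi_0(x)| = v(x)$ and $|\varphi_0(y)| = v(y)$, whence $|\varphi_0(x)\varphi_0(y)| = v(x)v(y)$.

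The main obstacle is ensuring that the supremum defining $v(x+\lambda y)$ is realized by an \emph{extremal} functional rather than by an arbitrary state; this is exactly where Bauer's maximum principle (equivalently, the Krein--Milman theorem combined with convexity of $|\varphi(\cdot)|$) enters. Once such a $\varphi_0$ is at hand, the remainder of the argument is purely formal manipulation of the triangle inequality in $\mathbb{C}$ together with the defining supremum of $v(\cdot)$.
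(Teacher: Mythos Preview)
Your proof is correct and follows essentially the same approach as the paper: both directions find a pure state at which the relevant numerical radius is attained and then collapse a triangle-inequality sandwich. Your explicit appeal to Bauer's maximum principle to guarantee attainment at an extreme point is slightly more detailed than the paper's version, which simply asserts (from the preliminaries) that such a pure state exists; otherwise the arguments coincide.
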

\begin{proof}
(i)$\Rightarrow$(ii) Let $x\,{\parallel}_v \,y$. Thus $v(x + \lambda y) = v(x) + v(y)$
for some $\lambda\in\mathbb{T}$. Therefore, there exists a pure state $\varphi$
on $\mathfrak{A}$ such that $|\varphi(x + \lambda y)| = v(x + \lambda y)$.
From this it follows that
\begin{align*}
v(x) + v(y) = v(x + \lambda y) &= |\varphi(x + \lambda y)|
\\& \leq |\varphi(x)| + |\varphi(y)| \leq v(x) + |\varphi(y)| \leq v(x) + v(y),
\end{align*}
and hence $|\varphi(x)| = v(x)$ and $|\varphi(y)| = v(y)$. Thus $|\varphi(x)\varphi(y)| = v(x)v(y)$.

(ii)$\Rightarrow$(i) Suppose (ii) holds.
We may assume that $|\varphi(x)\varphi(y)| \neq 0$ otherwise (i) trivially holds.
Put $\lambda = \frac{\varphi(x)\overline{\varphi(y)}}{|\varphi(x)\varphi(y)|}$.
Here, $\overline{\varphi(y)}$ denotes the complex conjugate of $\varphi(y)$.
Since
\begin{align*}
v(x)v(y) = |\varphi(x)\varphi(y)| \leq |\varphi(x)|v(y)\leq v(x)v(y),
\end{align*}
we have $v(x) = |\varphi(x)|$ and so $v(y) = |\varphi(y)|$.
Therefore,
\begin{align*}
v(x) + v(y) &= |\varphi(x)| + |\varphi(y)|
\\& = \left||\varphi(x)| + \frac{\overline{\varphi(y)}}{|\varphi(y)|}\varphi(y)\right|
\\& = \left|\varphi(x) + \frac{\varphi(x)\overline{\varphi(y)}}{|\varphi(x)\varphi(y)|}\varphi(y)\right|
\\& = |\varphi(x + \lambda y)|
\\& \leq v(x + \lambda y) \leq v(x) + v(y).
\end{align*}
This implies that $v(x + \lambda y) = v(x) + v(y)$ and hence $x\,{\parallel}_v \,y$.
\end{proof}
As a consequence of the preceding theorem, we have the following result.
\begin{corollary}\label{C.2.14}
Let $\mathfrak{A}$ be a $C^*$-algebra with identity $e$.
Then for every $x\in \mathfrak{A}$, $x\,{\parallel}_v \,e$.
\end{corollary}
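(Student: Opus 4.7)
The plan is to deduce this straight from Theorem \ref{T.2.13} by producing a pure state that simultaneously attains $v(x)$ and $v(e)$. First I would observe that since $e$ is self-adjoint (hence normal) we have $v(e) = \|e\| = 1$, and for every state $\varphi \in \mathcal{S}(\mathfrak{A})$ (in particular every pure state) one has $\varphi(e) = 1 = v(e)$. So the condition $|\varphi(x)\varphi(e)| = v(x)v(e)$ in Theorem \ref{T.2.13}(ii) collapses to the single requirement $|\varphi(x)| = v(x)$ for some pure state $\varphi$.

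Next I would produce such a pure state by the same attainment argument that was tacitly used in the proof of Theorem \ref{T.2.13}(i)$\Rightarrow$(ii). Choose $\lambda \in \mathbb{T}$ such that $\sup_{\varphi \in \mathcal{S}(\mathfrak{A})} \mathrm{Re}(\lambda\varphi(x)) = v(x)$; the set $F = \{\varphi \in \mathcal{S}(\mathfrak{A}) : \mathrm{Re}(\lambda\varphi(x)) = v(x)\}$ is a nonempty weak-$\ast$ closed face of $\mathcal{S}(\mathfrak{A})$, nonemptiness following from weak-$\ast$ compactness of $\mathcal{S}(\mathfrak{A})$ and continuity of $\varphi \mapsto \mathrm{Re}(\lambda\varphi(x))$. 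By the Krein--Milman theorem, $F$ contains an extreme point of $\mathcal{S}(\mathfrak{A})$, i.e.\ a pure state $\varphi \in \mathcal{P}(\mathfrak{A})$, and for this $\varphi$ we have $v(x) = \mathrm{Re}(\lambda\varphi(x)) \leq |\varphi(x)| \leq v(x)$, so $|\varphi(x)| = v(x)$.

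Combining the two steps, $|\varphi(x)\varphi(e)| = v(x)\cdot 1 = v(x)v(e)$, and Theorem \ref{T.2.13} yields $x\,{\parallel}_v\,e$. There is really no serious obstacle: the only slightly non-trivial ingredient is the existence of a pure state attaining $v(x)$, and that is precisely the ingredient already invoked (without detailed comment) in the proof of Theorem \ref{T.2.13}, so it is legitimate to reuse it here in a one-line invocation rather than re-deriving it.
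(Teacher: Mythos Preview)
Your proof is correct and follows essentially the same route as the paper: pick a pure state $\varphi$ with $|\varphi(x)| = v(x)$, note that $\varphi(e) = 1 = v(e)$, and apply Theorem~\ref{T.2.13}. The only difference is that you spell out the Krein--Milman argument for the existence of such a pure state, whereas the paper simply asserts it in one line.
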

\begin{proof}
Let $x\in \mathfrak{A}$. Thus there exists a pure state $\varphi$ on $\mathfrak{A}$ such that $|\varphi(x)| = v(x)$
and so $|\varphi(x)\varphi(e)| = |\varphi(x)| = v(x) = v(x)v(e)$.
Therefore, Theorem \ref{T.2.13} tells us that $x\,{\parallel}_v \,e$.
\end{proof}
As an immediate consequence of Theorem \ref{T.2.13}, Lemma \ref{L.2.1} and Theorem \ref{T.2.2}, we have the following result.
\begin{corollary}\label{C.2.15}
Let $\mathfrak{A}$ be a $C^*$-algebra and let $x, y\in \mathfrak{A}$.
If $x\,{\parallel}_v \,y$ then the following statements hold.
\begin{itemize}
\item[(i)] There exists a pure state $\varphi$ on $\mathfrak{A}$ such that
\begin{align*}
\displaystyle{\sup_{\theta \in \mathbb{R}}}\big|\mbox{Re}\big(e^{i\theta}\varphi(x)\big)\big|
\big|\mbox{Re}\big(e^{i\theta}\varphi(y)\big)\big|
= \displaystyle{\sup_{\theta \in \mathbb{R}}}\big\|\mbox{Re}(e^{i\theta}x)\big\|\big\|\mbox{Re}(e^{i\theta}y)\big\|.
\end{align*}
\item[(ii)] There exists a pure state $\varphi$ on $\mathfrak{A}$ such that
\begin{align*}
\displaystyle{\sup_{\theta \in \mathbb{R}}}\big|\mbox{Im}\big(e^{i\theta}\varphi(x)\big)\big|
\big|\mbox{Im}\big(e^{i\theta}\varphi(y)\big)\big|
= \displaystyle{\sup_{\theta \in \mathbb{R}}}\big\|\mbox{Im}(e^{i\theta}x)\big\|\big\|\mbox{Im}(e^{i\theta}y)\big\|.
\end{align*}
\end{itemize}
\end{corollary}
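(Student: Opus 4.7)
The plan is to chain the three earlier results cited in the corollary. From the hypothesis $x\,{\parallel}_v\,y$, Theorem~\ref{T.2.13} produces a pure state $\varphi\in\mathcal{P}(\mathfrak{A})$ with $|\varphi(x)\varphi(y)|=v(x)v(y)$. Since $|\varphi(x)|\le v(x)$ and $|\varphi(y)|\le v(y)$, this factorisation forces the separate identities $|\varphi(x)|=v(x)$ and $|\varphi(y)|=v(y)$ — exactly the splitting already carried out inside the proof of Theorem~\ref{T.2.13}, so no extra work is needed here.

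With this $\varphi$ in hand, I would feed it into Lemma~\ref{L.2.1}(i) applied to the scalars $\varphi(x)$ and $\varphi(y)$ to obtain
\[
\sup_{\theta\in\mathbb{R}}\big|\mbox{Re}(e^{i\theta}\varphi(x))\big|=|\varphi(x)|=v(x),\qquad
\sup_{\theta\in\mathbb{R}}\big|\mbox{Re}(e^{i\theta}\varphi(y))\big|=|\varphi(y)|=v(y),
\]
and then invoke Theorem~\ref{T.2.2}(i) on $x$ and $y$ separately to obtain
\[
\sup_{\theta\in\mathbb{R}}\|\mbox{Re}(e^{i\theta}x)\|=v(x),\qquad
\sup_{\theta\in\mathbb{R}}\|\mbox{Re}(e^{i\theta}y)\|=v(y).
\]
Pairing these identities, both sides of the equality in (i) reduce to the common value $v(x)v(y)$, which is the claim.

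Part (ii) is handled in exactly the same way: either invoke Lemma~\ref{L.2.1}(ii) and Theorem~\ref{T.2.2}(ii) in place of their (i)-counterparts, or substitute $x\mapsto ix,\ y\mapsto iy$ in (i) and use the identities $\mbox{Im}(e^{i\theta}z)=\mbox{Re}(e^{i\theta}(iz))$ and $v(iz)=v(z)$. There is no substantive obstacle to overcome; the only point that needs attention is the separation of the product condition $|\varphi(x)\varphi(y)|=v(x)v(y)$ into the two factorwise equalities $|\varphi(x)|=v(x)$ and $|\varphi(y)|=v(y)$, and that has already been isolated in Theorem~\ref{T.2.13}.
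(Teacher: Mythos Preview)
Your argument is exactly the chain the paper has in mind: the corollary is stated without a written proof beyond the phrase ``an immediate consequence of Theorem~\ref{T.2.13}, Lemma~\ref{L.2.1} and Theorem~\ref{T.2.2}'', and your proposal makes that chain explicit in the natural way.

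One word of caution about the final ``pairing'' step. Your claim that both displayed sides reduce to the common value $v(x)v(y)$ is valid when the expressions in (i) and (ii) are read as products of two separate suprema---which, given the cited ingredients, appears to be the intended reading. If instead the display is read as a single supremum over a common $\theta$ applied to the product, then the reduction fails: for $\mathfrak{A}=\mathbb{C}$, $x=1$, $y=i$ (so that $x\,{\parallel}_v\,y$ with $\lambda=-i$ and $\varphi=\mathrm{id}$), one has
\[
\sup_{\theta\in\mathbb{R}}\big|\mbox{Re}(e^{i\theta})\big|\,\big|\mbox{Re}(ie^{i\theta})\big|
=\sup_{\theta\in\mathbb{R}}|\cos\theta\sin\theta|=\tfrac12,
\]
not $v(x)v(y)=1$. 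So under that reading the equality of the two sides would require an additional argument beyond simply multiplying the four identities you listed.
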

We closed this paper with the following equivalence theorem.
In fact, our next result is a characterization of left or right homogenous for
the numerical radius parallelism in unital $C^*$-algebras.
\begin{theorem}
Let $\mathfrak{A}$ be a $C^*$-algebra with identity $e$
and let $c\in \mathcal{Z}(\mathfrak{A})\cap \mathcal{U}(\mathfrak{A})$.
Then for every $x, y \in \mathfrak{A}$ the following statements are equivalent:
\begin{itemize}
\item[(i)] $x\,{\parallel}_v \,y$.
\item[(ii)] $cx\,{\parallel}_v \,cy$.
\item[(iii)] $xc\,{\parallel}_v \,yc$.
\end{itemize}
\end{theorem}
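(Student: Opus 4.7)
My plan is to reduce everything to the pure-state characterization in Theorem~\ref{T.2.13}, exploiting the fact that a pure state acts multiplicatively on the centre.

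First, since $c \in \mathcal{Z}(\mathfrak{A})$ we have $cx = xc$ and $cy = yc$, so the equivalence (ii)$\Leftrightarrow$(iii) is literally a tautology and no work is required. All the content lies in (i)$\Leftrightarrow$(ii).

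The main ingredient is the following observation about pure states. For a pure state $\varphi \in \mathcal{P}(\mathfrak{A})$, the GNS representation $\pi_\varphi$ is irreducible, and since $c$ commutes with every element of $\mathfrak{A}$, $\pi_\varphi(c)$ lies in the commutant $\pi_\varphi(\mathfrak{A})' = \mathbb{C} \cdot I$ by Schur's lemma. Evaluating at the cyclic vector gives $\pi_\varphi(c) = \varphi(c) I$, and therefore
\begin{align*}
\varphi(cz) = \varphi(c)\varphi(z) \qquad (z \in \mathfrak{A}).
\end{align*}
Applying this with $z = c^*$ together with $c^*c = e$ yields $|\varphi(c)|^2 = \varphi(c^*c) = \varphi(e) = 1$, so $|\varphi(c)| = 1$ for every $\varphi \in \mathcal{P}(\mathfrak{A})$. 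Consequently, for any $z \in \mathfrak{A}$,
\begin{align*}
v(cz) = \sup_{\varphi \in \mathcal{P}(\mathfrak{A})} |\varphi(cz)|
     = \sup_{\varphi \in \mathcal{P}(\mathfrak{A})} |\varphi(c)|\,|\varphi(z)|
     = v(z).
\end{align*}

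With these two facts in hand the equivalence follows immediately from Theorem~\ref{T.2.13}. If $x \parallel_v y$, pick a pure state $\varphi$ with $|\varphi(x)\varphi(y)| = v(x)v(y)$; then
\begin{align*}
|\varphi(cx)\varphi(cy)| = |\varphi(c)|^2 |\varphi(x)\varphi(y)| = v(x)v(y) = v(cx)v(cy),
\end{align*}
so $cx \parallel_v cy$. Conversely, if $cx \parallel_v cy$, pick $\varphi$ with $|\varphi(cx)\varphi(cy)| = v(cx)v(cy)$ and run the same computation backwards, obtaining $|\varphi(x)\varphi(y)| = v(x)v(y)$, hence $x \parallel_v y$.

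I do not expect a real obstacle here; the only slightly delicate point is justifying $\varphi(cz) = \varphi(c)\varphi(z)$, but this is a standard application of Schur's lemma to the irreducible GNS representation of a pure state, and once this multiplicative behaviour on the centre is established the proof collapses into a two-line application of Theorem~\ref{T.2.13}.
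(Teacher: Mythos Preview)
Your proof is correct and rests on the same core fact as the paper's: for a pure state $\varphi$, the GNS representation is irreducible, so by Schur's lemma $\pi_\varphi(c)$ is a unimodular scalar, whence $|\varphi(cz)| = |\varphi(z)|$ and $v(cz) = v(z)$. The only difference is in the final step. The paper argues directly from the definition: once $v(cz) = v(z)$ is known, the equation $v(x+\lambda y) = v(x)+v(y)$ is literally the same equation as $v(cx+\lambda cy) = v(cx)+v(cy)$, so (i)$\Leftrightarrow$(ii) is immediate without invoking Theorem~\ref{T.2.13}. You instead route through the pure-state characterization, using the stronger multiplicativity $\varphi(cz) = \varphi(c)\varphi(z)$ to transfer the condition $|\varphi(x)\varphi(y)| = v(x)v(y)$ between the two pairs. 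Both arguments are short; yours makes explicit use of the paper's main characterization theorem, while the paper's is slightly more self-contained. Your observation that (ii)$\Leftrightarrow$(iii) is a tautology from $c\in\mathcal{Z}(\mathfrak{A})$ is cleaner than the paper's ``similar argument'' for $v(zc)=v(z)$.
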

\begin{proof}
Firstly, we show that $v(cz) = v(z) = v(zc)$ for all $z \in \mathfrak{A}$.
Let $\varphi$ be a pure state on $\mathfrak{A}$.
By \cite[Proposition 2.4.4]{Dix} there exist a Hilbert space $\mathscr{H}$,
an irreducible representation $\pi: \mathfrak{A}\rightarrow \mathbb{B}(\mathscr{H})$
and a unit vector $\xi\in \mathscr{H}$ such that for any $z\in\mathfrak{A}$
we have $\varphi(z)=\langle\pi(z)\xi, \xi\rangle.$
Since $c\in \mathcal{Z}(\mathfrak{A})$, by \cite[Proposition II.6.4.13]{Bl},
there exists $\alpha\in\mathbb{C}\setminus\{0\}$ such that $\pi(c) = \alpha e$.
Now from $c\in \mathcal{U}(\mathfrak{A})$ it follows that
\begin{align*}
|\alpha| = \|\pi(c)\| = \sup_{\|\xi\| = 1}\big\|\pi(c)\xi\big\|
= \sup_{\|\xi\| = 1}\sqrt{\langle\pi(c)\xi, \pi(c)\xi\rangle}
= \sup_{\|\xi\| = 1}\sqrt{\langle\pi(c^*c)\xi, \xi\rangle} = 1.
\end{align*}
Therefore for any $z\in\mathfrak{A}$ we obtain
\begin{align*}
|\varphi(cz)| = \big|\langle\pi(cz)\xi, \xi\rangle\big| = \big|\langle\pi(c)\pi(z)\xi, \xi\rangle\big|
= \big|\langle\alpha\pi(z)\xi, \xi\rangle\big| = \big|\langle\pi(z)\xi, \xi\rangle\big| = \big|\varphi(z)\big|.
\end{align*}
From this it follows that
\begin{align*}
v(cz) = \sup_{\varphi \in \mathcal{P}(\mathfrak{A})}\big|\varphi(cz)\big|
= \sup_{\varphi \in \mathcal{P}(\mathfrak{A})}\big|\varphi(z)\big| = v(z),
\end{align*}
and hence $v(cz) = v(z)$.

By using a similar argument we conclude that $v(z) = v(zc)$.

Now, let $x, y \in \mathfrak{A}$. Hence $x\,{\parallel}_v \,y$ if and only if $v(x + \lambda y) = v(x) + v(y)$
for some $\lambda\in\mathbb{T}$,
or equivalently, if and only if
$v\big(c(x + \lambda y)\big) = v(cx) + v(cy)$.
This holds if and only if
$v(cx + \lambda cy) = v(cx) + v(cy)$
for some $\lambda\in\mathbb{T}$,
or equivalently, if and only if
$cx\,{\parallel}_v \,cy$. Therefore, (i)$\Leftrightarrow$(ii).

The proof of the equivalence (i)$\Leftrightarrow$(iii) is similar, so we omit it.
\end{proof}
\bibliographystyle{amsplain}

\end{document}